\theoremstyle{definition}
\newtheorem{thm}{Theorem}[section]
\newtheorem{lem}[thm]{Lemma}
\newtheorem{prp}[thm]{Proposition}
\newtheorem{dfn}[thm]{Definition}
\newtheorem{cor}[thm]{Corollary}
\newtheorem{rmk}[thm]{Remark}
\newtheorem{exa}[thm]{Example}
\newcommand{\beq}{\begin{equation*}}
\newcommand{\eeq}{\end{equation*}}
\newcommand{\beqr}{\begin{eqnarray*}}
\newcommand{\eeqr}{\end{eqnarray*}}
\newcommand{\bal}{\begin{align*}} 
\newcommand{\eal}{\end{align*}}
\newcommand{\bei}{\begin{itemize}}
\newcommand{\eei}{\end{itemize}}
\newcommand{\dlim}{\underrightarrow\lim\,}
\newcommand{\af}{\alpha}
\newcommand{\bt}{\beta}
\newcommand{\gm}{\gamma}
\newcommand{\ld}{\lambda}
\newcommand{\om}{\omega}
\newcommand{\Gm}{\Gamma}
\newcommand{\Z}{{\mathbb{Z}}}
\newcommand{\C}{{\mathbb{C}}}
\newcommand{\T}{{\mathbb{T
}}}
\newcommand{\id}{{\mathrm{id}}}
\newcommand{\diag}{{\mathrm{diag}}}
\newcommand{\Aut}{{\mathrm{Aut}}}
\newcommand{\Ad}{{\mathrm{Ad}}}
\newcommand{\QED}{\rule{0.4em}{2ex}}
\newcommand{\cra}{\curvearrowright}
\begin{document}
\title{Classification of certain inductive limit actions of compact groups on AF algebras}
\author{Qingyun Wang}
\maketitle

\begin{abstract}
Let $A=\dlim{A_n}$ be an AF algebra, $G$ be a compact group. We consider inductive limit actions of the form $\af=\dlim{\af_n}$, where $\af_n\colon G\cra A_n$ is an action on the finite dimensional C*-algebra $A_n$ which fixes each matrix summand. If each $\af_n$ is inner, such actions are classified by equivariant K-theory in \cite{Handelman1985}. However, if the actions $\af_n$ are not inner, we show that such actions are not classifiable by equivariant K-theory. We give a complete classification of such actions using twisted equivariant K-theory.
\end{abstract}

\section{Introduction}
Throughout the whole paper, we shall restrict our attention to compact group actions, although some of the definitions and results are valid for more general groups.\\
Following the convention in \cite{Phillips1987}, if $\af\colon G\cra A$ is an action, we shall also use $(G, A, \af)$ to denote this action and call $A$ a $G$-C*-algebra. All actions are assumed to be continuous in the point-norm topology, i.e. for each $a\in A$, the map $g\rightarrow \af_g(a)$ is continuous.
\begin{dfn}
Let $A = \dlim{A_n}$ be a C*-algebra, $G$ be a compact group. Let $\af_n\colon G\cra A_n$ be a sequence of actions such that the connecting maps $\phi_{n, m}\colon A_n\rightarrow A_m$ are all equivariant. Then there is an induced action $\af\colon G\cra A$. We call $\af$ an \emph{inductive limit action} in this case. We shall write
\beq
(G, A, \af) = \dlim(G, A_n ,\af_n).
\eeq
We say $\af$ is \emph{locally representable} if each $\af_n$ is induced by a representation $G\rightarrow U(A_n)$. We say $\af$ is \emph{locally spectrally trivial} if each $\af_n$ induces trivial action on the spectrum of $A_n$.
\end{dfn}

We shall say that $\af$ is an inductive limit action on an $AF$ algebra if each $A_n$ is finite dimensional in the above definition.\\
The classification of C*-algebras admitting an inductive limit structure has been extensively studied. However, the equivariant version is far less well-understood. For example, it's not known how to classify all inductive limit actions of a finite group on a general AF algebra, except when the group is finite cyclic;
 see \cite{Elliott1996b} and \cite{Li2013}. There is a number of results on classification of locally representable inductive limit actions, see e.g. \cite{Handelman1985}, \cite{Kishimoto1990}, \cite{Bratteli1991} and \cite{Dean2009}. On the other direction, inductive limits of actions on certain homogeneous algebras, which acts trivially on the fiber but non-trivially on the spectrum, has been classified in \cite{Su1996}.

 The major invariant used in aforementioned results is the equivariant K-theory. For the class of actions we shall consider, we will show that equivariant K-theory is not enough for classification. We shall introduce the twisted equivariant K-theory for compact group actions on C*-algebras, which generalize the ordinary equivariant K-theory. The main result of this paper is a complete classification using this new invariant. We expect that this new invariant will be useful in future classifications of group actions on C*-algebras.

\section{Actions on full matrix algebras}
We start our analysis from actions on matrix algebras. we shall soon see that one has to go beyond representable actions and look for more invariants than just equivariant K-theory. For the definition of equivariant K-theory and it's properties, our standard reference is \cite{Phillips1987}.\\
Let $M_n$ be the $n\times n$ complex matrix algebra. Let $\af\colon G\cra M_n$ be an action. It’s easy to see that $\Aut(M_n)$ is isomorphic to $PU_n$ as groups, where $PU_n = U_n/\T$ is the complex projective unitary group. One can check that they are actually isomorphic as topological groups, where $PU_n$ has the standard quotient topology. Note that for $n>1$, there is no continuous section from $PU_n$ to $U_n$, so a continuous map $G\rightarrow PU_n$ does not necessarily lift to a continuous map $G\rightarrow U_n$. However, by a result of Dixmier, there is always a Borel section from $PU_n$ to $U_n$, so any continuous map $G\rightarrow PU_n$ admits a lift $G\rightarrow U_n$ which is Borel.\\
The above argument shows that there is a Borel family of unitaries $U_g$ such that $\af_g = \Ad\,U_g$ for each $g\in G$. Now use the identities $\af_g\af_h = \af_{gh}$ and $(\af_g\af_h )\af_k = \af_g(\af_h\af_k)$, we can show that there is a Borel function $\ld : G \times G \rightarrow \T$ such that:
\begin{enumerate}[(1)]
\item $U_gU_h = \ld_{g,h}U_{gh}$,
\item $\ld_{g,1}=\ld_{1,g}=1,\quad\quad\ld_{g,h} \ld_{gh,k} = \ld_{g,hk} \ld_{h,k}$.
\end{enumerate}
\begin{dfn}
A Borel function $\ld\colon G\times G \rightarrow \T$ satisfying the second condition above is called a \emph{2-cocycle}. A Borel map $\pi: G \rightarrow U_n$ satisfying $U_gU_h = \ld_{g,h} U_{gh}$ for some 2-cocycle $\ld$ is called a \emph{$\ld$-representation}, or a \emph{cocycle representation} (also called \emph{projective representation} in some literature) if we do not want to stress $\ld$. If $\ld$ is trivial (i.e. $\ld_{g,h} = 1$ for all $g, h$), we will simply say $\pi$ is a representation.
\end{dfn}
\begin{rmk}
What we have just defined should be ``unitary'' $\ld$-representation. Since we will not talk about non-unitary ones, we shall make this convention for the rest of the paper. Most concepts for representations carry over to cocycle representations naturally, like direct sum, tensor product, unitary equivalence etc, we shall use these terminologies freely. We refer to \cite{Karpilovsky1985projective} for basic facts of cocycle representations.
\end{rmk}
Now suppose $\{V_g\}$ is another Borel family of unitaries inducing $\af$, let $\tilde{\ld}$ be the corresponding 2-cocycle. Then there exist a Borel function $\mu\colon G \rightarrow \T$ such that 
\beq
U_g = \mu_g V_g \quad\quad \ld_{g,h} =\frac{\mu_g\mu_h}{\mu_{gh}}\tilde{\ld}_{g,h}
\eeq
A Borel function $s : G \times G \rightarrow \T$ of the form $\mu_g \mu_h /\mu_{gh}$ for some $\mu : G \rightarrow \T$ is called a \emph{2-coboundary}. In this case, we shall write
\beq
s=d\,\mu.
\eeq
Two cocycles differ by a coboundary is said to be \emph{cohomologous}. Let $Z^2(G, \T)$ be the set of 2-cocycles and $B^2(G, \T)$ be the set of 2-coboundaries. It’s easy to see that $B^2(G, \T)$ becomes a subgroup of $Z^2(G, \T)$ under pointwise multiplication. If $\ld\in Z^2(G, \T)$, we shall use either $\bar{\ld}$ or $\ld^{-1}$ to denote the inverse of $\ld$, where $\bar{\ld}$ is the complex conjugate of $\ld$.
\begin{dfn}(\cite{Moore1976}) The second (measurable) cohomology group of $G$, denoted by $H_m^2(G, \T)$, is the quotient group $Z^2(G, \T)/B^2(G, \T)$. 
\end{dfn}
By the above analysis, for any action $\af\colon G\cra M_n$ , there is a well-defined element $[\ld_\af]\in H_m^2(G, \T)$. We can see that $[\ld_\af] = [\ld_\bt]$ if $\af$ and $\bt$ are conjugate, and $\af$ is inner (induced by a representation) if and only if $[\ld_\af]$ is the identity element. On the other hand, any $\ld$-representation induces an action. The induced action is continuous because a Borel homomorphism between two Polish groups is automatically continuous, by a classical theorem of Banach.\\
In general, $H_m^2(G, \T)$ could be non-trivial, even for finite abelian groups $G$. It is known that for any $\ld \in Z^2(G, \T)$, there is at least one $\ld$-representation. So if we pick some $\ld$-representation such that $[\ld]$ is not the identity element of $H_m^2(G, \T)$, then it will induce an action which is pointwisely inner but not inner. Using cocycle representations, we can actually produce examples of two actions with the same equivariant K-theory, but are not conjugate.
\begin{exa}\label{fdce}
Let $G = \Z/3\Z \oplus \Z/3\Z$. Denote the generators of $G$ by $e_1$ and $e_2$. Let $\om$ be a primitive 3rd root of unity. Set\\
\beq
A_1=\left(\begin{array}{ccc}
1 & 0 & 0\\
0 & \om & 0\\
0 & 0 & \om^2
\end{array}\right),
\quad
A_2=\left(\begin{array}{ccc}
1 & 0 & 0\\
0 & \om^2 & 0\\
0 & 0 & \om
\end{array}\right),
\quad
B=\left(\begin{array}{ccc}
0 & 1 & 0\\
0 & 0 & 1\\
1 & 0 &0
\end{array}\right).
\eeq

For $i = 1, 2$, let $\pi_i\colon G \rightarrow U_3$ be defined by
\beq
\pi_i(e_1^r e_2^s) = A_i^r B^s,\quad\text{for\,\,} 0 \leq r, s \leq 2.
\eeq
Straightforward computation shows that $\pi_i$ is a cocycle representation corresponding some 2-cocycle $\ld_i$ . Denote the trivial cocycle by 1, then one can check that $[1], [\ld_1]$ and $[\ld_2]$ are all different in $H_m^2(G, \T)$ (Actually $H_m^2(G, \T) = \Z/3\Z$).\\
For $i=1, 2$, let $\af_i : G\cra M_3$ be the action induced by $\pi_i$. Then $\af_1$ and $\af_2$ are not conjugate. But their equivariant K-theories $K_0^G(A, \af_i)$ are isomorphic. To see this, we first check that both crossed products $A \rtimes_{\af_i} G$ are isomorphic to $M_9$. This could be proved either by elementary methods or referring to Example \ref{z3z3} in the next section. By Julg's theorem, $K_0^G(A, \af_1) \cong K_0^G(A, \af_2)$ as groups. For abelian group actions, one can identify the representation ring $R(G)$ with $\Z(\hat{G})$, and the module structure on the equivariant $K_0$-groups is induced by the dual action $\hat{G}\cra A\rtimes_{\af_i} G$. Since both crossed products are isomorphic to a matrix algebra, the dual group $\hat{G}$ acts trivially on $K_0(A\rtimes_{\af_i} G)$. Therefore  $K_0^G(A, \af_1) \cong K_0^G(A, \af_2)$ as modules over the representation ring.
\end{exa}
A counter-example for actions on infinite dimensional C*-algebras could also be obtained, but the construction is more sophisticated. 
\begin{exa}
We still let $G = \Z/3\Z \oplus \Z/3\Z$ and let $\pi_i , \ld_i, \af_i$ be constructed as in Example \ref{fdce}. Let $A$ be the the UHF-algebra $M_{3^\infty}$. As a consequence of \cite{Wang2015}, one can find two product-type actions (infinite tensors of inner actions on $M_3$ ) $\bt_1 , \bt_2\colon G\cra A$ such that $\bt_1$ has the tracial Rokhlin property but not the Rokhlin property, while $\bt_2$ has the Rokhlin property. Let $\gm_i = \af_i \otimes \bt_i$ be the tensor product action, for $i = 1, 2$. Then $K_0^G(A, \gm_1)\cong K_0^G(A, \gm_2)$ but $\gm_1$ and $\gm_2$ are not conjugate.
\end{exa}
Viewing $\gm_i$ as an inductive limit action on $M_3 \otimes A \cong A$ in the natural way, we can see that at each finite stage, $\gm_i$ is induced by some $\ld_i$-representation. Hence the crossed products are both isomorphic to some full matrix algebra. Arguing as in Example \ref{fdce}, and using the fact that equivariant K-theory is sequentially continuous:
\beq
K_0^G (A, \af) = \dlim K_0^G (A_n , \af_n)
\eeq
One can show that $K_0^G (A, \gm_1 ) \cong K_0^G(A, \gm_2 )$. That $\gm_1$ and $\gm_2$ are not conjugate is an immediate consequence of the following fact:
\begin{prp} Let $\af\colon G\cra A$ be an action of a finite group $G$ on a simple C*-algebra $A$. Let $\bt \colon G\cra M_n$ be an arbitrary action. Then $\af$ has the tracial Rokhlin property if and only if $\af \otimes \bt$ has the tracial Rokhlin property. $\af$ has the Rokhlin property if and only if $\af \otimes \bt$ has the Rokhlin property.
\end{prp}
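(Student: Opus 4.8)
The plan is to transport Rokhlin-type configurations back and forth along the inclusion $A\hookrightarrow A\otimes M_n$, $a\mapsto a\otimes 1$, and its approximate left inverse, the conditional expectation $E=\mathrm{id}_A\otimes\tau_n\colon A\otimes M_n\to A\otimes 1$ ($\tau_n$ the tracial state of $M_n$), which also equals $\int_{U(M_n)}\Ad(1\otimes u)\,du$. Two facts drive everything: on $A\otimes 1$ the action $\af\otimes\bt$ is simply $\af$, since $(\af\otimes\bt)_g(a\otimes 1)=\af_g(a)\otimes 1$ irrespective of $\bt$; and the relative commutant of $1\otimes M_n$ in $A\otimes M_n$ is exactly $A\otimes 1$, so a projection almost commuting with $1\otimes M_n$ (equivalently, with its unitaries) is close to $A\otimes 1$ via $E$. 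I will always reduce a finite test set in $A\otimes M_n$ to one of the form $\{a\otimes e_{ij}\colon a\in F_0\}$ with $F_0\subseteq A$ finite and $\{e_{ij}\}$ the matrix units of $M_n$. (As usual $A$ is taken unital, and infinite-dimensional when the tracial Rokhlin property is in play.)

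For the Rokhlin property this is routine in both directions. If $\af$ has the Rokhlin property I take Rokhlin projections $e_g\in A$ for $F_0$ and set $\tilde e_g:=e_g\otimes 1$: these are mutually orthogonal, sum to $1$, satisfy $\|(\af\otimes\bt)_g(\tilde e_h)-\tilde e_{gh}\|=\|\af_g(e_h)-e_{gh}\|$, commute with $1\otimes M_n$, and $\|[\tilde e_g,a\otimes e_{ij}]\|=\|[e_g,a]\|$. Conversely, from Rokhlin projections $f_g\in A\otimes M_n$ for $\{a\otimes 1\colon a\in F_0\}\cup\{1\otimes e_{ij}\}$ with a tiny tolerance, $E(f_g)\in A\otimes 1$ is close to $f_g$, hence a near-projection; continuous functional calculus together with the standard perturbation of an almost-orthogonal family of projections yields mutually orthogonal projections $e_g\in A$ with $e_g\otimes 1$ close to $f_g$, and one checks that $\sum_g e_g$ is a projection within distance $<1$ of $1$ (hence $=1$) and that $\|\af_g(e_h)-e_{gh}\|$ and $\|[e_g,a]\|$ are small, so $\af$ has the Rokhlin property.

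For the tracial Rokhlin property the transport is the same and the work is in the comparison clause. In the direction $\af\Rightarrow\af\otimes\bt$ I need, for each nonzero positive $x\in A\otimes M_n$, a nonzero positive $z\in A$ with $z\otimes 1_{M_n}\precsim x$: since $x\ne 0$, some diagonal entry $y:=x_{ii}\in A$ is nonzero and $y\otimes e_{ii}=(1\otimes e_{ii})x(1\otimes e_{ii})\precsim x$; because $A$ is simple and infinite-dimensional, the hereditary subalgebra $\overline{yAy}$ is again simple and infinite-dimensional, hence contains $n$ nonzero, mutually orthogonal, mutually Cuntz-equivalent positive elements $z_1,\dots,z_n$, so $z:=z_1$ satisfies $n[z]=[z_1+\cdots+z_n]\le [y]$ in $\mathrm{Cu}(A)$ and therefore $z\otimes 1_{M_n}\precsim y\otimes e_{ii}\precsim x$. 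Applying the tracial Rokhlin property of $\af$ with this $z$ and transporting gives projections $\tilde e_g=e_g\otimes 1$ with $1-\sum_g\tilde e_g=(1-\sum_g e_g)\otimes 1\precsim z\otimes 1_{M_n}\precsim x$, which, being a projection, is Murray–von Neumann equivalent to a projection in $\overline{x(A\otimes M_n)x}$. For $\af\otimes\bt\Rightarrow\af$, given nonzero positive $y\in A$ I apply the tracial Rokhlin property of $\af\otimes\bt$ with the element $y\otimes e_{11}$, push the resulting $f_g$ back into $A$ as before, and compress the relation $1-\sum_g f_g\precsim y\otimes e_{11}$ to the corner $e_{11}(A\otimes M_n)e_{11}\cong A$ to get $1-\sum_g e_g\precsim y$.

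The hard part will be the norm clause $\|(\sum_g e_g)x(\sum_g e_g)\|>1-\ep$ in the definition of the tracial Rokhlin property, which the transport does not obviously respect: $\sum_g(e_g\otimes 1)$ only controls a compression of $x$ through a single diagonal entry, whose norm can be as small as $\|x\|/n$. I expect to dispatch this in the standard way — either by working with the equivalent formulation of the tracial Rokhlin property in which the norm clause is dropped and instead $1-\sum_g e_g\precsim x$ is required for every nonzero positive $x$ (under which the previous paragraph is already a complete argument), or directly, by first arranging $1-\sum_g(e_g\otimes 1)$ to have Cuntz class small enough that comparison in the simple algebra $A\otimes M_n$ forces $\sum_g(e_g\otimes 1)$ to meet the spectral part of $x$ near $\|x\|$, which produces the estimate. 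The remaining ingredients (the perturbation lemmas for near-projections and almost-orthogonal families, and elementary Cuntz-comparison facts such as $pxp\precsim x$ and additivity of $[\,\cdot\,]$ on orthogonal sums) are standard.
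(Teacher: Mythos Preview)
Your approach is correct in outline but takes a genuinely different route from the paper's. The paper never manipulates Rokhlin projections at all; instead it reduces the hard direction ($\af\otimes\bt$ has the property $\Rightarrow$ $\af$ does) to the special case where $\bt$ is inner, which is exactly Lemma~3.9 of \cite{Phillips2011}, and then observes that the general case follows by a one-line cocycle trick. Namely, if $\bt$ is induced by a $\ld$-representation $\pi$, its contragredient $\bar\pi$ is a $\bar\ld$-representation, so $\pi\otimes\bar\pi$ is an honest representation and $\bt\otimes\bar\bt$ is inner on $M_{n^2}$. Thus $\af\otimes\bt$ having the (tracial) Rokhlin property implies $\af\otimes(\bt\otimes\bar\bt)$ does, and the cited lemma strips off the inner tensor factor.

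What your argument buys is self-containment and a small conceptual gain: you are in effect reproving Phillips's Lemma~3.9 while noticing that its mechanism (averaging over $U(M_n)$ via $\id_A\otimes\ta_n$, which is automatically $(\af\otimes\bt)$-equivariant since $\ta_n$ is $\Aut(M_n)$-invariant) never uses innerness of $\bt$, only that $\bt_g(1)=1$. So you bypass cocycle representations entirely. The paper's route, by contrast, is much shorter once the citation is granted and dovetails with the theme of the section, where the cancellation $\ld\bar\ld=1$ is precisely the point. Your caveat about the norm clause is well placed; passing to the equivalent formulation of the tracial Rokhlin property in which that clause is replaced by the universal comparison condition $1-\sum_g e_g\precsim x$ is the clean way to finish.
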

\begin{proof} 
We shall only deal with the tracial Rokhlin property part. The proof of the Rokhlin property part is similar. One direction is easy: if $\af$ has the tracial Rokhlin property then so does $\af\otimes\bt$. The converse is true if $\bt$ is an inner action, by Lemma 3.9 of \cite{Phillips2011}. For arbitrary $\bt$, suppose it is induced by some $\ld$-representation $\pi$. Let $\bar{\bt}$ be the action induced by $\bar{\pi}$, the contragredient of $\pi$. We see that $\bar{\pi}$ is a $\bar{\ld}$-representation, therefore $\pi \otimes \bar{\pi}$ is a genuine representation. Now assume $\af\otimes \bt$ has the tracial Rokhlin property, then so does $\af \otimes \bt \otimes \bar{\bt}$. But now $\bt \otimes \bar{\bt}$ is an inner action, by Lemma 3.9 of \cite{Phillips2011}, $\af$ has the tracial Rokhlin property.
\end{proof}

\section{Twisted equivariant K-theory}
The example in previous section suggest us to look for additional invariant in order to obtain classification result. Before we describe the new invariant, we need some basic facts about cocycle representations. 
\begin{dfn} Let $G$ be a compact group, let $\ld$ be a 2-cocycle. We let $V^\ld (G)$ be the set of equivalent classes of $\ld$-representations of $G$. It becomes an abelian semigroup under direct sum of representations. We use $R^\ld (G)$ to denote the Grothendieck completion of $V^\ld (G)$, i.e. $R^\ld (G)$ is the group of formal differences of equivalent classes of $\ld$-representations.
\end{dfn}
We shall call $R^\ld (G)$ the \emph{$\ld$-representation group}. We also use the term \emph{cocycle representation group} when the 2-cocycle $\ld$ is not specified. $R^\ld (G)$ becomes an ordered abelian group with $V^\ld (G)$ being the positive cone. When $\ld$ is trivial, i.e. $\ld(g, h) = 1$ for any $g, h \in G$, we simply write $R^\ld (G)$ by $R(G)$, which is just the representation ring of $G$.\\

Unlike ordinary representations, when $\ld$ is nontrivial, the group $R^\ld (G)$ does not form a ring under tensor product. This is because if $\rho_i$ is a $\ld_i$ -representation, for $i = 1, 2$, then $\rho_1 \otimes \rho_2$ is a $\ld_1 \ld_2$ -representation. Instead, tensor product gives
us a bunch of pairings
\beq
R^{\ld_1} (G) \times R^{\ld_2} (G) \rightarrow R^{\ld_1\ld_2} (G),
\eeq
which are commutative and associative in the sense that
\begin{enumerate}[(1)]
\item $[\rho_1 ] \otimes [\rho_2 ] = [\rho_2 ] \otimes [\rho_1 ]$.
\item $([\rho_1 ] \otimes [\rho_2 ]) \otimes [\rho_3 ] = [\rho_1 ] \otimes ([\rho_2 ]\otimes[\rho_3 ])$.
\end{enumerate}

\begin{prp} 
If $\ld_1$ and $\ld_2$ are cohomologous, then $R^{\ld_1}(G)$ and $R^{ \ld_2}(G)$ are isomorphic as ordered abelian group.
\end{prp}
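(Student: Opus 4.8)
The plan is to produce an explicit isomorphism from the data witnessing that $\ld_1$ and $\ld_2$ are cohomologous. Suppose $\ld_2 = (d\mu)\,\ld_1$ for some Borel function $\mu\colon G\to\T$; that is, $\ld_{2;g,h} = \frac{\mu_g\mu_h}{\mu_{gh}}\ld_{1;g,h}$ for all $g,h\in G$. Given a $\ld_1$-representation $\pi$ on a space $V$, I define $\pi^\mu$ by $\pi^\mu_g = \mu_g\,\pi_g$. A one-line check using the cocycle identity shows $\pi^\mu_g\pi^\mu_h = \mu_g\mu_h\,\ld_{1;g,h}\,\pi_{gh} = \frac{\mu_g\mu_h}{\mu_{gh}}\ld_{1;g,h}\cdot\mu_{gh}\pi_{gh} = \ld_{2;g,h}\,\pi^\mu_{gh}$, so $\pi^\mu$ is a $\ld_2$-representation on the same space. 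Since each $\mu_g$ is a scalar, $\pi^\mu_g$ is unitary whenever $\pi_g$ is, so this stays within the class of (unitary) cocycle representations.

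Next I would verify that $\pi\mapsto\pi^\mu$ respects the relevant structure: if $\pi\cong\pi'$ via a unitary $W$ (so $W\pi_g = \pi'_g W$), then the same $W$ intertwines $\pi^\mu$ and $(\pi')^\mu$ because the scalars $\mu_g$ commute with $W$; and $(\pi\oplus\pi')^\mu = \pi^\mu\oplus(\pi')^\mu$ on the nose. Hence the assignment descends to a well-defined semigroup homomorphism $V^{\ld_1}(G)\to V^{\ld_2}(G)$. Applying the same construction with $\mu^{-1}$ (noting $d(\mu^{-1}) = (d\mu)^{-1}$, so it carries $\ld_2$-representations back to $\ld_1$-representations) gives a two-sided inverse at the level of representations, hence on $V^{\ld_1}(G)$; passing to Grothendieck completions yields a group isomorphism $R^{\ld_1}(G)\xrightarrow{\;\cong\;}R^{\ld_2}(G)$. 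Finally, because the map sends genuine $\ld_1$-representations to genuine $\ld_2$-representations and conversely, it carries the positive cone $V^{\ld_1}(G)$ onto $V^{\ld_2}(G)$, so it is an isomorphism of ordered abelian groups.

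There is no serious obstacle here; the only point that needs a moment's care is checking that $\pi^\mu$ is again a \emph{continuous} (equivalently, by the Banach theorem cited earlier, Borel) cocycle representation — but $g\mapsto\mu_g$ is Borel and $g\mapsto\pi_g$ is Borel, so the pointwise product is Borel, and unitarity is automatic as noted. One should also observe that the isomorphism class of the resulting map $R^{\ld_1}(G)\to R^{\ld_2}(G)$ may depend on the choice of $\mu$ (different $\mu$'s differ by a genuine character of $G$, which twists the identification by tensoring with a one-dimensional representation); since the statement only asserts existence of \emph{an} isomorphism, I would simply fix one such $\mu$ and not dwell on this ambiguity.
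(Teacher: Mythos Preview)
Your proposal is correct and follows essentially the same route as the paper: pick $\mu$ with $\ld_2=(d\mu)\ld_1$, send a $\ld_1$-representation $\rho$ to $\mu\rho$, observe this gives a semigroup isomorphism $V^{\ld_1}(G)\to V^{\ld_2}(G)$ with inverse $\rho\mapsto\mu^{-1}\rho$, and pass to Grothendieck completions. Your additional remarks on Borel measurability and the non-canonical dependence on $\mu$ are apt; the paper makes the latter observation immediately after its proof as well.
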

\begin{proof} Since $\ld_1$ and $\ld_2$ are cohomologous, there is some Borel function $\mu\colon G\rightarrow \T$ such that $\ld_2= d\mu\,\ld_1$. Let $\rho \colon G \rightarrow U(H)$ be a $\ld_1$ -representation. Let $L(\rho) \colon G \rightarrow U(H)$ be defined by $L(\rho)(g) = \mu(g)\rho(g)$. Then $L(\rho)$ is a $\ld_2$-representation. It’s an easy exercise to check that $L$ defines an isomorphism of $V^{\ld_1}(G)$ and $V^{\ld_2} (G)$, with inverse defined by $R(\rho)(g)=\mu^{-1}(g)\rho(g)$. Hence it defines an isomorphism of the ordered abelian groups.
\end{proof}
Note however that there is no natural choice of $\mu$ in the above proof, and different choices may lead to different isomorphisms. Now we are ready to define our invariant, which is certain twisted version of equivariant K-theory. The main idea is to generalize the construction of equivariant K-theory by using cocycle representations of the group instead of ordinary representations. In the following we shall fix an action $\af\colon G \cra A$ of a compact group on a C*-algebra $A$. \\
\begin{dfn} (Definition 2.4.1, \cite{Phillips1987}) Let $\ld$ be a 2-cocycle. Let $(G, H_1 , \rho_1 )$ and $(G, H_2 , \rho_2 )$ be two $\ld$-representations. Let $L(H_1 , H_2 )$ be the set of all linear maps from $H_1$ to $H_2$ . If $\dim(H_1 ) = m$ and $\dim(H_2 ) = n$, then we can identify $L(H_1 , H_2 )$ with $M_{m,n} (\C)$. One checks that 
\beq
g \cdot t = \rho_2 (g) \cdot t \cdot \rho_1 (g)^{-1}
\eeq
defines an action of $G$ on $L(H_1 , H_2 )$. \\
We identify $L(H_1 , H_2 ) \otimes A$ with a set of $A$-module homomorphisms from $H_1 \otimes A$ to $H_2 \otimes A$, by formula 
\beq
(t \otimes a)(\xi \otimes b) = t\xi \otimes ab,\quad\quad  t \otimes a \in L(H_1 , H_2 ) \otimes A,\,\, \xi \otimes b \in H_1 \otimes A.
\eeq
When $A$ is unital, we can also identify $L(H_1 , H_2 ) \otimes A$ with $M_{m,n}(A)$.\\
Give $L(H_1 , H_2 ) \otimes A$ the diagonal action, so that $g \cdot (t \otimes a) = (gt \otimes \af_g (a))$, for $t \in L(H_1 , H_2 )$ and $a \in A$. Define the product of $s \in L(H_2 , H_1 ) \otimes A$ and $t \in L(H_3 , H_2 ) \otimes A$ to be their product as $A$-module homomorphisms. Then one has $(s \otimes a)(t \otimes b) = (st \otimes ab)$. We also define an adjoint operation from $L(H_1 , H_2 ) \otimes A$ to $L(H_2 , H_1 ) \otimes A$ by $(t \otimes a) ^\ast = (t^\ast \otimes a^\ast )$. We see that multiplication and adjoint are preserved by the $G$-action.
\end{dfn}
We write $L(H)$ for $L(H, H)$.

\begin{dfn} Let $(G, A, \af)$ be a $G$-C*-algebra. Let $\ld$ be a 2-cocycle. We define $P^\ld (G, A, \af)$ be the the set of pairs $(p, \pi)$, where $(G, H, \pi)$ is a $\ld$-representation and $p$ is an invariant projection of $L(H) \otimes A$ (The action on $L(H)$ is induced by $\pi$). If $(p_1 , \pi_1)$ and $(p_2 , \pi_2)$ are two
such pairs, we define an addition
\beq
(p_1, \pi_1 ) \oplus (p_2, \pi_2)\colon=(\diag\{p_1 , p_2\},  \pi_1\oplus\pi_2)
\eeq
Two pairs $(p_1 , \pi_1)$ and $(p_2 , \pi_2)$ are called Murray-von Neumann equivalent if there is an $G$-invariant element $u \in L(H_1 , H_2 ) \otimes A$ such that $u ^* u = p$ and $uu ^\ast = q$.\\
We let $V^\ld (G, A, \af)$ be the set of equivalence classes in $P^\ld (G, A, \af)$. The addition in $P^{\ld}(G, A, \af)$ respect the equivalence relation, therefore defines an addition on $V^{\ld}(G, A, \af)$. Just like ordinary K-theory, one can show that $V^{\ld}(G, A, \af)$ is an abelian semigroup with identity.\\
If $\phi \colon (G, A, \af) \rightarrow (G, B, \bt)$ is an equivariant homomorphism of $G$-C*-algebras, we define
\beq
\phi^\ast\colon V^\ld (G, A, \af) \rightarrow V^\ld (G, B, \bt)\quad\text{by}\quad\phi^\ast ([(p, \pi)]) = [((\id_{L(H)} \otimes \phi)(p), \pi)].
\eeq
\end{dfn}

\begin{lem}(Lemma 2.4.3, \cite{Phillips1987}) $V^\ld(\ast)$ is a covariant functor from $G$-C*-algebras to abelian semigroups with identity.
\end{lem}

\begin{dfn}
 If $A$ is unital, we define $K_0^\ld (G, A, \af)$ to be the Grothendieck completion of $V^\ld (G, A, \af)$. If $A$ is non-unital, Let $\tilde{A}$ be the unitalization of $A$ and $\tilde{\af}$ be the natural extension of $\af$. Let $p\colon (G, \tilde{A}, \tilde{\af})\rightarrow (G, A, \af)$ be the canonical projection. Then $K_0^\ld (G, A, \af)$ is defined to be $\ker p^\ast$. If $G$ and $A$ are clear from the context, we shall simply write $K_0^{\ld}(\af)$. We shall write $K_0^{\ld}(\af)_+$ for the image of $V^{\ld}(G, A, \af)$ in $K_0^{\ld}(\af)$ under the canonical map.
\end{dfn}
When $A$ is stably finite, it becomes an ordered abelian group with $K_0^{\ld}(\af)_+$ being the positive cone. One checks that $K_0^\ld(\ast)$ is a covariant functor from stably finite $G$-C*-algebras to ordered abelian groups.\\

Now for any pairs of 2-cocycles $\ld_1$ and $\ld_2$ , there is a natural map:
\beq
R^{\ld_1} (G) \times K_0^{\ld_2} (\af) \rightarrow K_0^{\ld_1\ld_2} (\af), 
\eeq
induced by
\beq
[(G, H_1 , \pi_1 )] \times [(p, \pi_2 )] \rightarrow [(p \otimes 1, \pi_2 \otimes \pi_1 )],
\eeq
Which becomes an element in
\beq
Hom(R^{\ld_1} (G),\,\,\, Hom(K_0^{\ld_2}(\af), K_0^{\ld_1 \ld_2} (\af))).
\eeq
We shall call this map the \emph{partial action} of $R^{\ld_1}(G)$ on $K_0^{\ld_2}(\af)$. The reason for using this terminology is that, if we take the direct sum of $K_0^{\ld}(\af)$ for different 2-cocycle $\ld$, then $R^{\ld_1}(G)$ indeed gives an action. As a special case, when $\ld_1$ is trivial, the partial action of $R^{\ld_1}(G)=R(G)$ gives $K_0^{\ld_2} (\af)$ a module structure over the representation ring.\\

One can check that this map is compatible with the pairing between cocycle representation groups in the sense that the following diagram commutes:
\\

\[
\begin{tikzcd}
R^{\ld_1} (G) \times R^{\ld_2} (G) \times K_0^{\ld_3} (\af)\arrow{r}\arrow{d} &R^{\ld_1 \ld_2} (G) \times K_0^{\ld_3} (\af)\arrow{d}\\
R^{\ld_1} (G) \times K_0^{\ld_2 \ld_3} (\af) \arrow{r} &K_0^{\ld_1 \ld_2 \ld_3} (\af)
\end{tikzcd}
\]

When $A$ is unital, there is one special element lives in $K_0(\af)$ ($\ld$ is trivial) which is represented by $(1_A , \pi_t)$, where $\pi_t$ is the trivial representation. We shall denote this element by $[1_\af]$.\\
Let $\phi \colon (G, A, \af) \rightarrow (G, B, \bt)$ be an equivariant homomorphism. Then we can check that the induced map
\beq
\phi ^\ast \colon K_0^\ld (\af) \rightarrow K_0^\ld (\bt)
\eeq
is compatible with the partial actions of the cocycle representation groups in the sense that the following diagram commutes:

\[
\begin{tikzcd}
K_0^{\ld_2} (\af)\arrow{r}{\phi^\ast}\arrow{d}[left]{R^{\ld_1}(G)} & K_0^{\ld_2} (\bt)\arrow{d}[left]{R^{\ld_1}(G)}\\
K_0^{\ld_1\ld_2} (\af)\arrow{r}{\phi^\ast} & K_0^{\ld_1\ld_2} (\bt)
\end{tikzcd}
\]

Just like equivariant K-theory, there is also a module picture of the twisted equivariant K-theory. One can also talk about $K_1$, but we will not need it in this paper.\\

The following theorem could be proved by mimicking the proof of the original Julg's theorem. We refer the reader to \cite{Packer1989} for the definition of twisted crossed product and its basic properties.
\begin{thm}
(Julg's theorem) Let $(G, A, \af)$ be an $G$-C*-algebra, where $G$ is compact. Then:
\beq
K_0^\ld (\af) \cong K_0(A\rtimes_{\af,\bar{\ld}} G),
\eeq
where $\bar{\ld}$ is the complex conjugate of $\ld$ (viewed as a complex function), and $A \rtimes_{\af, \bar{\ld}} G$ is the twisted crossed product.
\end{thm}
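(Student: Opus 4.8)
The plan is to reduce to the unital case and then obtain the statement from the \emph{untwisted} Julg isomorphism (\cite{Phillips1987}) by means of an ``untwisting'' lemma; this is in the spirit of mimicking Julg's proof, but packages the extra work as a single algebraic reduction. Twisted crossed products and their universal property are taken as in \cite{Packer1989}. If $A$ is non-unital one passes to $\tilde A$, observes that $\tilde A\rtimes_{\tilde\af,\bar\ld}G$ contains $A\rtimes_{\af,\bar\ld}G$ as an ideal with quotient $\C\rtimes_{\bar\ld}G$, and uses naturality together with the definition of $K_0^\ld(\af)$ as a kernel; so assume $A$ unital from now on.

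The crucial observation is that, for a $\ld$-representation $\pi$ of $G$ on $H$ with $\dim H=n$, the conjugation $g\cdot t=\pi(g)t\pi(g)^{-1}$ on $L(H)\cong M_n(\C)$ is a \emph{genuine} action (the cocycle cancels in the conjugation), so $\bt_g:=\Ad(\pi(g))\otimes\af_g$ is an honest action of $G$ on $M_n(A)=L(H)\otimes A$, and the elements of $P^\ld(G,A,\af)$ built from this $\pi$ are exactly the $\bt$-invariant projections of $M_n(A)$. Inside $M(M_n(A\rtimes_{\af,\bar\ld}G))$ put $w_g:=\pi(g)\otimes u_g$, where the $u_g$ are the canonical unitary multipliers of the twisted crossed product, so $u_gu_h=\bar\ld_{g,h}u_{gh}$. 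Then
\beq
w_gw_h=\pi(g)\pi(h)\otimes u_gu_h=\ld_{g,h}\overline{\ld_{g,h}}\,(\pi(gh)\otimes u_{gh})=w_{gh},
\eeq
so $\{w_g\}$ is a genuine, strictly continuous unitary representation of $G$ with $\Ad(w_g)=\bt_g$ on $M_n(A)$; this is precisely where the conjugate cocycle $\bar\ld$ must appear. Feeding $(\id_{M_n(A)},w)$ into the universal property of the ordinary crossed product, and in the other direction the $\bar\ld$-representation $g\mapsto(\pi(g)^{-1}\otimes 1)v_g$ (which, by the same cancellation, commutes with $M_n(\C)$ and implements $\af$ on $A$) into the universal property of the twisted crossed product, one obtains mutually inverse $*$-isomorphisms
\beq
M_n(A)\rtimes_\bt G\;\cong\;M_n(A\rtimes_{\af,\bar\ld}G).
\eeq
Composing the untwisted Julg isomorphism $K_0^G(M_n(A),\bt)\cong K_0(M_n(A)\rtimes_\bt G)$ with this identification defines, for each $\ld$-representation $\pi$, a map on $\bt$-invariant projections landing in $K_0(A\rtimes_{\af,\bar\ld}G)$; concretely it sends $p$ to $[pe]$, where $e=\int_G w_g\,dg$ is a projection in $M(M_n(A\rtimes_{\af,\bar\ld}G))$ (since $w$ is a representation of the compact group $G$) and $p$ commutes with $e$ because $p$ is $\bt$-invariant.

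One then checks that, as $\pi$ ranges over all $\ld$-representations, these maps respect direct sums and send Murray--von Neumann equivalent pairs to the same class: a $G$-invariant $u\in L(H_1,H_2)\otimes A$ witnessing such an equivalence produces $ue$, a partial isometry in $M_{n_1+n_2}(A\rtimes_{\af,\bar\ld}G)$ witnessing the corresponding equivalence downstairs. Hence they assemble into a homomorphism $\Phi\colon K_0^\ld(\af)\to K_0(A\rtimes_{\af,\bar\ld}G)$ compatible with positive cones. Bijectivity follows by stabilization: every class of $K_0(A\rtimes_{\af,\bar\ld}G)$ is represented over $M_m(A\rtimes_{\af,\bar\ld}G)$ for some $m$ that is the dimension of a $\ld$-representation $\pi_0$ (the dimensions of $\ld$-representations are cofinal in the usual $K_0$ colimit), so $M_m(A\rtimes_{\af,\bar\ld}G)\cong M_m(A)\rtimes_{\bt_0}G$; applying the surjectivity, resp.\ injectivity, of the untwisted Julg map, and using that $\rho\otimes\pi_0$ is again a $\ld$-representation for every genuine representation $\rho$ (so that a class in $K_0^G(M_m(A),\bt_0)$, represented by $\rho$ and a $(\Ad\rho\otimes\bt_0)$-invariant projection $P$, corresponds to $(P,\rho\otimes\pi_0)\in P^\ld(G,A,\af)$), one recovers a preimage, resp.\ a chain of equivalences inside $P^\ld(G,A,\af)$. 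As a consistency check, for $A=\C$ the assertion is $R^\ld(G)\cong K_0(\C\rtimes_{\bar\ld}G)$, which follows from the twisted Peter--Weyl decomposition $\C\rtimes_{\bar\ld}G\cong\overline{\bigoplus}_{[\sm]}M_{d_\sm}(\C)$ over irreducible $\bar\ld$-representations together with the contragredient identification $R^\ld(G)\cong R^{\bar\ld}(G)$.

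I expect the main work to be not a single deep step but the bookkeeping: verifying that the two covariant representations defining the untwisting isomorphism are well defined, strictly continuous, and mutually inverse, and keeping the conventions straight so that the cocycle on the crossed-product side is $\bar\ld$ (the scalars $\ld_{g,h}\overline{\ld_{g,h}}$ must cancel) and the twist in the Peter--Weyl identification is contragredient. The cofinality argument for bijectivity also needs care, precisely because $\ld$-representations need not exist in every matrix dimension, so one must pass to a cofinal subsystem of sizes before invoking the untwisted theorem.
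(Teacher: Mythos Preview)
The paper does not actually supply a proof here: it only remarks that the result ``could be proved by mimicking the proof of the original Julg's theorem'' and points to \cite{Packer1989} for twisted crossed products. Your route is a legitimate alternative to a line-by-line adaptation. Rather than redo Julg's argument with $\ld$-representations in place of ordinary ones, you invoke the untwisted Julg isomorphism as a black box and isolate the genuinely new content in a Packer--Raeburn-style untwisting lemma $M_n(A)\rtimes_\bt G\cong M_n(A\rtimes_{\af,\bar\ld}G)$, whose heart is the cocycle cancellation $\ld_{g,h}\overline{\ld_{g,h}}=1$ in $w_g=\pi(g)\otimes u_g$. This buys you a transparent explanation of why the conjugate cocycle $\bar\ld$ must appear on the crossed-product side, and it localises the remaining bookkeeping to (i) the cofinality of $\ld$-representation dimensions (they are closed under addition and nonempty, hence unbounded) and (ii) the identification of $K_0^G(M_n(A),\bt)$-classes with elements of $P^\ld(G,A,\af)$ via $\rh\mapsto\rh\otimes\pi_0$. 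A direct adaptation, as the paper suggests, would instead build the map $[(p,\pi)]\mapsto\big[p\cdot\int_G(\pi(g)\otimes u_g)\,dg\big]$ from scratch and verify bijectivity by hand; your route trades that labour for the untwisting isomorphism. One small caveat worth making explicit in your write-up: since $\pi$ is a priori only Borel, so is $g\mapsto w_g$; you need the automatic-continuity argument (Banach/Pettis, as the paper itself invokes for actions on $M_n$) to conclude strict continuity before feeding $(\id,w)$ into the universal property of the ordinary crossed product.
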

The $K^{\ld}$-groups are sequentially continuous and commute with direct sums:
\begin{prp}\label{sc}
 If $(G, A, \af) =\dlim (G, A_n , \af_ n)$, then for any 2-cocycle $\ld$, we have
\beq
K_0^{\ld} (\af) \cong \dlim K_0^{\ld} (\af_n).
\eeq
Moreover, the partial actions of $R^{\ld}(G)$ commutes with inductive limit.
\end{prp}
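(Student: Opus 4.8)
The plan is to run the argument that establishes continuity of ordinary equivariant $K$-theory (cf.\ \cite{Phillips1987}) and to observe that the $2$-cocycle $\ld$ plays no role in it. The point is that, although $\pi$ is only a $\ld$-representation, the diagonal action on $L(H)\otimes A$ is a genuine $G$-action: in the formula $g\cdot t=\rho(g)\,t\,\rho(g)^{-1}$ the scalars $\ld_{g,h}$ cancel, and likewise for the action on $L(H_1,H_2)$ attached to a pair of $\ld$-representations. Fix a finite-dimensional $\ld$-representation $(G,H,\pi)$ (for compact $G$ this is no loss of generality) and an equivariant system with $(G,A,\af)=\dlim(G,A_n,\af_n)$. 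Since $\dim H<\infty$, the algebras $L(H)\otimes A_n$ with their diagonal actions (induced by $\pi$ on $L(H)$ and $\af_n$ on $A_n$) form an equivariant inductive system whose limit is $L(H)\otimes A$ with the corresponding diagonal action, and similarly with $L(H_1,H_2)$ in place of $L(H)$. Thus the whole statement, for a fixed $\ld$ and before one passes to direct sums of representations, reduces to: (i) every $G$-invariant projection in $L(H)\otimes A$ is \mvnt, via a $G$-invariant element of $L(H)\otimes A$, to a $G$-invariant projection lying in the image of some $L(H)\otimes A_n$; and (ii) if two $G$-invariant projections coming from a finite stage are \mvnt via a $G$-invariant element of $L(H_1,H_2)\otimes A$, then they are already so equivalent at some finite stage.

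Both (i) and (ii) follow from the familiar perturbation arguments, made equivariant by averaging over the normalized Haar measure of $G$. For (i): given a $G$-invariant projection $p$, pick a self-adjoint $b$ at some stage with $\|b-p\|$ small; replacing $b$ by $\int_G g\cdot b\,dg$ (still at that stage, still self-adjoint, still close to $p$ because $p$ is invariant) we may assume $b$ is $G$-invariant; then continuous functional calculus turns $b$ into a projection $p'$ at the same stage with $\|p'-p\|$ small, and $p'$ is $G$-invariant because the automorphisms commute with functional calculus; finally the standard partial isometry implementing the \mvnc of the nearby projections $p$ and $p'$ is $G$-invariant, being built out of $p$ and $p'$ alone. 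For (ii) one argues the same way, approximating a $G$-invariant $u$ with $u^{*}u=p$ and $uu^{*}=q$ by an element at a finite stage, averaging it, and then correcting by the partial-isometry part of a polar decomposition; invariance is preserved at each step.

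Granting (i) and (ii), one gets $V^\ld(G,A,\af)=\dlim V^\ld(G,A_n,\af_n)$: surjectivity onto the limit is (i) (the representation is untouched and only the projection is pushed down), while injectivity is (ii). Passing to Grothendieck completions --- and, when $A$ is non-unital, to the kernels of the maps induced by the canonical projections from the unitalizations, using that unitalization commutes with inductive limits --- yields $K_0^\ld(\af)=\dlim K_0^\ld(\af_n)$, compatibly with the positive cones. For the ``moreover'' clause, note that the partial action $R^{\ld_1}(G)\times K_0^{\ld_2}(\af_n)\to K_0^{\ld_1\ld_2}(\af_n)$ given by $[\pi_1]\times[(p,\pi_2)]\mapsto[(p\otimes 1,\pi_2\otimes\pi_1)]$ is natural in the $G$-C*-algebra (if $p$ comes from stage $n$ then so does $p\otimes 1$, via the same formula along the connecting maps), hence it passes to the inductive limit. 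Alternatively the group isomorphism $K_0^\ld(\af)\cong\dlim K_0^\ld(\af_n)$ follows at once from Julg's theorem together with the facts that twisted crossed products commute with equivariant inductive limits and that $K_0$ is continuous; but the direct argument above is needed anyway to keep track of the order structure and the partial actions.

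The one place that needs care --- the expected (mild) obstacle --- is the equivariant bookkeeping in the perturbations: checking that averaging over $G$ keeps the corrected elements inside the correct $A_n$ and only degrades the norm estimates by a controlled amount, and that every partial isometry produced (from nearby projections, or from a polar decomposition) is genuinely $G$-invariant. Once the untwisted continuity statement of \cite{Phillips1987} is in hand this amounts to little more than quoting it, since, as noted, $L(H)\otimes A$ and $L(H_1,H_2)\otimes A$ are honest $G$-C*-algebras over which the cocycle never appears.
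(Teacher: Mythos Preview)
The paper states this proposition without proof, presumably because it is a routine adaptation of the continuity of ordinary equivariant $K$-theory (\cite{Phillips1987}) once one notes---as you do---that the cocycle $\ld$ cancels in the adjoint action on $L(H)\otimes A$ and on $L(H_1,H_2)\otimes A$, leaving genuine $G$-actions. Your argument is correct and supplies exactly the details one would expect: the Haar-averaging perturbation lemmas for (i) and (ii), the passage from $V^\ld$ to $K_0^\ld$, and the naturality of the partial action; the alternative route via Julg's theorem and continuity of twisted crossed products is also sound and is likely what the author had in mind for the bare group isomorphism.
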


\begin{prp}\label{ds}
 Let $(G, A, \af)$ and $(G, B, \bt)$ be two $G$-C*-algebras, then for any 2-cocycle $\ld$, we have
\beq
K_0^\ld (G, A \oplus B, \af \oplus \bt) \cong K_0^\ld (\af) \oplus K_0^\ld (\bt),
\eeq
which are compatible with the partial actions of the cocycle representation groups.
\end{prp}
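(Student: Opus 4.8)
The plan is to prove Proposition \ref{ds} by exhibiting mutually inverse semigroup homomorphisms on the level of $V^{\ld}$ and then invoking the Grothendieck construction. First I would unwind the definition of $P^{\ld}(G, A \oplus B, \af \oplus \bt)$: an element is a pair $(p, \pi)$ where $(G, H, \pi)$ is a $\ld$-representation and $p$ is a $G$-invariant projection in $L(H) \otimes (A \oplus B) \cong (L(H) \otimes A) \oplus (L(H) \otimes B)$. Writing $p = (p_A, p_B)$ under this decomposition, invariance of $p$ for the diagonal action is equivalent to invariance of each coordinate separately (since the $G$-action on $L(H) \otimes (A \oplus B)$ respects the direct sum decomposition, acting diagonally on each summand with the \emph{same} $\pi$ on the $L(H)$ factor). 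Thus $(p, \pi) \mapsto \big( (p_A, \pi), (p_B, \pi) \big)$ gives a well-defined map from $P^{\ld}(G, A \oplus B, \af \oplus \bt)$ to $P^{\ld}(G, A, \af) \times P^{\ld}(G, B, \bt)$.

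Next I would check that this map respects the addition $\oplus$ (which on both sides replaces $\pi$ by a direct sum $\pi_1 \oplus \pi_2$ and $p$ by $\diag\{p_1, p_2\}$; the identification $L(H_1 \oplus H_2) \otimes (A \oplus B) \cong (L(H_1 \oplus H_2) \otimes A) \oplus (L(H_1 \oplus H_2) \otimes B)$ is the obvious one and commutes with the block-diagonal embedding) and the Murray--von Neumann equivalence relation. For the latter: a $G$-invariant partial isometry $u \in L(H_1, H_2) \otimes (A \oplus B)$ with $u^*u = p$, $uu^* = q$ decomposes as $u = (u_A, u_B)$ with $u_A, u_B$ $G$-invariant partial isometries implementing the equivalences coordinatewise, and conversely a pair of such equivalences using the \emph{same} $\ld$-representations on each side glues back to an equivalence over $A \oplus B$. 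Here one must be slightly careful: Murray--von Neumann equivalence as defined allows the two pairs to have different underlying $\ld$-representations $\pi_1, \pi_2$, so to compare a class over $A \oplus B$ with a pair of classes one should observe that every class is represented using a chosen $\pi$ and the equivalences can be arranged over a common $\pi$ (as in the standard stabilization argument for equivariant K-theory). This yields a well-defined semigroup isomorphism $V^{\ld}(G, A \oplus B, \af \oplus \bt) \cong V^{\ld}(\af) \oplus V^{\ld}(\bt)$, with inverse $\big( [(p_A, \pi)], [(p_B, \sigma)] \big) \mapsto [(\diag\{p_A \oplus 0, 0 \oplus p_B\}, \pi \oplus \sigma)]$ after padding.

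Passing to Grothendieck completions (and, in the non-unital case, computing the kernel of the projection to the scalar part, which also splits as a direct sum since unitalization and the canonical projection are compatible with $\oplus$ up to the standard identification $\widetilde{A \oplus B}$ versus $\tilde A \oplus_{\C} \tilde B$) gives the group isomorphism $K_0^{\ld}(G, A \oplus B, \af \oplus \bt) \cong K_0^{\ld}(\af) \oplus K_0^{\ld}(\bt)$. Finally, compatibility with the partial action of $R^{\ld_1}(G)$ is immediate from the formula $[(G, H_1, \pi_1)] \times [(p, \pi_2)] \mapsto [(p \otimes 1, \pi_2 \otimes \pi_1)]$: tensoring $p = (p_A, p_B)$ with $1$ and $\pi_2$ with $\pi_1$ acts coordinatewise, since $(A \oplus B) \otimes \C \cong A \oplus B$ respects the decomposition, so the relevant square commutes on the nose.

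The main obstacle is the bookkeeping in the non-unital case and, relatedly, the common-representation issue in the Murray--von Neumann equivalence: one has to make sure that the semigroup map is genuinely bijective rather than merely well-defined, which requires the routine but slightly fiddly observation that any two $\ld$-representations can be absorbed into a common larger one, so that a pair of classes over $A$ and over $B$ always lifts to a single class over $A \oplus B$. Everything else is a matter of tracing through the definitions, and I would present it as such rather than writing out each verification in full.
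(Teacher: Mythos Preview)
The paper states Proposition~\ref{ds} without proof, treating it (like Proposition~\ref{sc}) as a routine extension of the corresponding fact for ordinary equivariant K-theory in \cite{Phillips1987}. Your proposal is correct and is exactly the argument one would write down: split $L(H)\otimes(A\oplus B)$ as $(L(H)\otimes A)\oplus(L(H)\otimes B)$, observe that invariance, Murray--von~Neumann equivalence, and the partial actions all decompose coordinatewise, and pass to Grothendieck groups. The two caveats you flag (absorbing a pair of $\ld$-representations into a common one for the inverse map, and the unitalization bookkeeping in the non-unital case) are genuine but, as you say, routine; an alternative for the non-unital case is to invoke the paper's Julg theorem together with $(A\oplus B)\rtimes_{\af\oplus\bt,\bar\ld}G\cong(A\rtimes_{\af,\bar\ld}G)\oplus(B\rtimes_{\bt,\bar\ld}G)$ and the direct-sum property of ordinary $K_0$, though one then has to check separately that the partial actions match under Julg's isomorphism.
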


We now give some examples of twisted equivariant K-theory.
\begin{prp}\label{k0ma}
Let $\af\colon G\cra M_n$ be an action induced by a $\ld_1$-representation $\rho$. Then there is an ordered group isomorphism between $K_0^{\ld_2} (\af)$ and $R^{\ld_1 \ld_2} (G)$, for each 2-cocycle $\ld_2$. Furthermore, under these isomorphisms, the partial action of $R^{\ld_3} (G)$ on $K_0^{\ld_2} (\af)$ becomes the pairing between $R^{\ld_3}(G)$ and $R^{\ld_1 \ld_2} (G)$ (induced by tensor product). The special element $[1_\af]$ becomes $[\rho]$.
\end{prp}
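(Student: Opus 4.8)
The plan is to build an explicit ordered-group isomorphism out of the representation theory of cocycle representations of $G$. Write $M_n = L(H_0)$ with $H_0 = \C^n$, so $\af_g = \Ad\,\rho(g)$. For a pair $(p,\pi)\in P^{\ld_2}(G,M_n,\af)$ the identification $L(H)\otimes M_n = L(H\otimes H_0)$ carries the diagonal $G$-action to $\Ad(\pi\otimes\rho)$, and $\pi\otimes\rho$ is a $\ld_1\ld_2$-representation on $H\otimes H_0$; hence an invariant projection $p$ is precisely a projection commuting with $(\pi\otimes\rho)(G)$, and its range is a $\ld_1\ld_2$-subrepresentation of $\pi\otimes\rho$. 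I would define
\[
\Phi(p,\pi) = \big[\,\mathrm{ran}\,p,\ (\pi\otimes\rho)|_{\mathrm{ran}\,p}\,\big]\in V^{\ld_1\ld_2}(G).
\]
One checks that $\Phi$ is additive, since $(\pi_1\oplus\pi_2)\otimes\rho = (\pi_1\otimes\rho)\oplus(\pi_2\otimes\rho)$ and $\mathrm{ran}\,\diag\{p_1,p_2\} = \mathrm{ran}\,p_1\oplus\mathrm{ran}\,p_2$, and that it respects Murray--von Neumann equivalence, since a $G$-invariant $u$ with $u^*u=p_1$, $uu^*=p_2$ intertwines $\pi_1\otimes\rho$ and $\pi_2\otimes\rho$ and restricts to an equivariant unitary between the two subrepresentations. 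Thus $\Phi$ descends to a homomorphism of abelian semigroups $\bar\Phi\colon V^{\ld_2}(G,M_n,\af)\to V^{\ld_1\ld_2}(G)$.

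I would then prove $\bar\Phi$ is bijective. For injectivity: if $\Phi(p_1,\pi_1)=\Phi(p_2,\pi_2)$ there is a $G$-equivariant unitary $\mathrm{ran}\,p_1\to\mathrm{ran}\,p_2$; extending it by zero gives a $G$-invariant $u\in L(H_1\otimes H_0,H_2\otimes H_0)=L(H_1,H_2)\otimes M_n$ with $u^*u=p_1$, $uu^*=p_2$, so $(p_1,\pi_1)$ and $(p_2,\pi_2)$ are Murray--von Neumann equivalent. For surjectivity: given a $\ld_1\ld_2$-representation $\sigma$ on $K$, let $\bar\rho$ be the contragredient of $\rho$, which is a $\overline{\ld_1}$-representation, and set $\pi=\sigma\otimes\bar\rho$, which is a $\ld_2$-representation. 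Under the standard equivariant identification $\bar H_0\otimes H_0\cong L(H_0)=(M_n,\Ad\rho)$ the element $1_{H_0}$ is an invariant vector, so $\bar\rho\otimes\rho$ has a one-dimensional trivial subrepresentation; let $e_\rho$ be the corresponding rank-one invariant projection in $L(\bar H_0\otimes H_0)$. Then $p=1_K\otimes e_\rho$ is an invariant projection in $L(H_\pi)\otimes M_n$ and $(\pi\otimes\rho)|_{\mathrm{ran}\,p}\cong\sigma\otimes(\text{trivial})\cong\sigma$, so $\Phi(p,\pi)=[\sigma]$. Passing to Grothendieck completions and recalling that the positive cones of $K_0^{\ld_2}(\af)$ and of $R^{\ld_1\ld_2}(G)$ are the images of $V^{\ld_2}(G,M_n,\af)$ and $V^{\ld_1\ld_2}(G)$, the semigroup isomorphism $\bar\Phi$ yields the asserted ordered-group isomorphism $K_0^{\ld_2}(\af)\cong R^{\ld_1\ld_2}(G)$.

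It remains to match the extra structure. For the partial action of $R^{\ld_3}(G)$: given $[\chi]\in R^{\ld_3}(G)$ and $(p,\pi)$, the unitary permuting tensor factors intertwines $(\pi\otimes\chi)\otimes\rho$ with $(\pi\otimes\rho)\otimes\chi$ (the two cocycles $\ld_2\ld_3\ld_1$ and $\ld_2\ld_1\ld_3$ coincide pointwise, $\T$ being abelian) and carries $\mathrm{ran}(p\otimes 1_{H_\chi})=\mathrm{ran}\,p\otimes H_\chi$ accordingly; hence $\Phi(p\otimes 1_{H_\chi},\pi\otimes\chi)=\Phi(p,\pi)\otimes[\chi]$, which by commutativity of the pairing is exactly the tensor-product pairing $R^{\ld_3}(G)\times R^{\ld_1\ld_2}(G)\to R^{\ld_1\ld_2\ld_3}(G)$ transported through $\bar\Phi$. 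For the distinguished element, $[1_\af]=[(1_{M_n},\pi_t)]$ with $\pi_t$ the trivial representation on $\C$, and $\pi_t\otimes\rho=\rho$, $\mathrm{ran}\,1_{M_n}=H_0$, so $\Phi(1_{M_n},\pi_t)=[\rho]$. The only ingredient beyond bookkeeping is the representation theory of cocycle representations of compact groups (complete reducibility, Schur's lemma, and the identification of $G$-invariant operators with intertwiners), for which I would cite \cite{Karpilovsky1985projective} and \cite{Phillips1987}; the place where care is genuinely needed is keeping the several tensor factors and their cocycles straight, particularly in the surjectivity construction $\pi=\sigma\otimes\bar\rho$.
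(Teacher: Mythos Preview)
Your proof is correct and follows essentially the same approach as the paper: the same map $[(p,\pi)]\mapsto[(\pi\otimes\rho)|_{\mathrm{ran}\,p}]$, and the same surjectivity trick via the contragredient $\bar\rho$ and the trivial subrepresentation of $\bar\rho\otimes\rho$. The only cosmetic differences are that you exhibit that trivial subrepresentation via the identification $\bar H_0\otimes H_0\cong L(H_0)$ (the paper instead computes $\langle\chi_{\bar\rho\otimes\rho},\chi_{\pi_t}\rangle=\langle\chi_\rho,\chi_\rho\rangle>0$), and you spell out the partial-action compatibility and injectivity, which the paper leaves as routine.
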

\begin{proof} The map between $K_0^{\ld_2} (\af)$ and $R^{\ld_1 \ld_2} (G)$ is induced by:
\beq
[(p, \pi)] \rightarrow [(\pi \otimes \rho)\,\vert\,_p ].
\eeq
It’s routine to check that this is a well-defined group homomorphism which preserves the positive cone. It’s also easy to see that this map is injective. To show that this map is surjective, we need only to show that any $\ld_1\ld_2$-representation $\pi'$ is in the image. Let $\bar{\rho}$ be the contragredient representation of $\rho$, then $\bar{\rho}\otimes \rho$ is an ordinary representation. It contains the trivial representation $\pi_t$ as a subrepresentation, because
\beq 
\langle \chi_{\bar{\rho}\otimes \rho} , \chi_{\pi_t}\rangle = \int_G \chi_{\bar{\rho}}(g)\chi_{\rho}(g)\, dg = \langle \chi_\rho, \chi_\rho\rangle > 0.
\eeq

Let $e$ be the 1-dimensional projection correspond to a trivial subrepresentation of $\bar{\rho}\otimes \rho$. Now consider $[(1\otimes e, \pi'\otimes \bar{\rho})]$, which is evidently an element of $K_0^{\ld_2} (\af)$ being mapped to $[\pi']$. \\
Finally, it following from the definition of the isomorphism that $[1_\af]$ corresponds to $[\rho]$.
\end{proof}

\begin{exa}\label{z3z3}
Let $G=\Z/3\Z\oplus \Z/3\Z$. Let $\pi_i$, $\ld_i$ and $\af_i$ be constructed as in Example \ref{fdce}. Then $R^{\ld_i}(G)\cong \Z$, generated by $[\pi_i]$.
\end{exa}
\begin{proof}
See Lemma 2.4 of \cite{Higgs2001}.
\end{proof}

\section{Equivariant Classification}
\begin{dfn}
Let $\af\colon G\cra A$ be an action, where $G$ is compact. The \emph{scale} of $K_0(\af)$ is
\beq
D(\af)=\{[(p, \pi_t)]\in K_0(\af)\,\vert\, p\in A, \pi_t \text{\,\,is the trivial representation}\}.
\eeq
We use $\xi_\af$ to denote $[1_\af]$ if $A$ is unital, and the scale $D(\af)$ if $A$ is non-unital. 
\end{dfn}
Following the convention in the Elliott's classification program, we will write the invariant of $(G, A, \af)$ by $Ell(G, A, \af)$, or $Ell(\af)$ for short if $G$ and $A$ is clear from the context. It consists of the collection of the ordered abelian groups ${K_0^\ld(\af)}$ together with $\xi_\af$, and all the partial actions by $R^\ld(G)$.\\
Let $\af\colon G\cra A$ and $\bt\colon G\cra B$ be two actions. A homomorphism
\beq
T \colon Ell(\af) \rightarrow Ell(\bt)
\eeq 
is understood to be a collection of ordered group homomorphisms
\beq
T^\ld \colon (K_0^\ld (\af), K_0^\ld(\af)_+) \rightarrow (K_0^\ld(\bt), K_0^\ld(\bt)_+),
\eeq
which are compatible with the partial actions of the cocycle representations groups. That is, the following diagram is commutative for any 2-cocycles $\ld_1$ and $\ld_2$:
\[
\begin{tikzcd}
K_0^{\ld_2} (\af)\arrow{r}{T^{\ld_2}}\arrow{d}[left]{R^{\ld_1}(G)} & K_0^{\ld_2} (\bt)\arrow{d}[left]{R^{\ld_1}(G)}\\
K_0^{\ld_1\ld_2} (\af)\arrow{r}{T^{\ld_1}} & K_0^{\ld_1\ld_2} (\bt)
\end{tikzcd}
\]
We say $T$ is \emph{unital} if $T$ sends $[1_\af]$ to $[1_\bt]$. We say $T$ is \emph{contractive} if it preserves the scales. We say $T$ is an \emph{isomorphism} if each $T^{\ld}$ is an isomorphism and it sends $\xi_\af$ to $\xi_\bt$.\\
Now we are ready to state the main theorem of this paper:
\begin{thm}
 Let $G$ be a compact group. Let $A$, $B$ be two AF algebras. Let $\af \colon G \cra A$ and $\bt \colon G \cra B$ be two inductive limit actions which are locally spectrally trivial. Suppose $A$, $B$ are either both unital or both non-unital. Then $\af$ and $\bt$ are conjugate if and only if
\beq
Ell(\af) \cong Ell(\bt).
\eeq
\end{thm}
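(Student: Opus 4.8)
The plan is to prove the ``only if'' direction by functoriality and the ``if'' direction by an equivariant two-sided Elliott intertwining, with all the real content packed into one existence-and-uniqueness statement about equivariant homomorphisms out of finite-dimensional building blocks. For ``only if'': if $\Phi$ conjugates $\af$ to $\bt$, then $\Phi^\ast$ furnishes ordered group isomorphisms $K_0^\ld(\af)\to K_0^\ld(\bt)$ for all $\ld$, and the naturality facts recorded in Section 3 show that they are simultaneously compatible with all the partial actions of the $R^\mu(G)$ and carry $\xi_\af$ to $\xi_\bt$; hence $Ell(\af)\cong Ell(\bt)$. (Local spectral triviality is used here only to keep $A,B$ in the class.)

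For ``if'', write $(G,A,\af)=\dlim(G,A_n,\af_n)$ and $(G,B,\bt)=\dlim(G,B_n,\bt_n)$; by Section 2 each summand $M_{n_j}$ of each $A_n=\bigoplus_j M_{n_j}$ carries $\af_n|_{M_{n_j}}=\Ad\,\rho_j$ for a $\ld_j$-representation $\rho_j$, and likewise for $B_n$. By Propositions \ref{k0ma} and \ref{ds} we have $K_0^\ld(\af_n)\cong\bigoplus_j R^{\ld_j\ld}(G)$, with the partial actions given by tensor pairings and $[1_{\af_n}]$ corresponding to $([\rho_j])_j$. The crux is the following. Let $C$ be a finite-dimensional $G$-algebra whose action fixes each matrix summand and $D$ an AF algebra with a locally spectrally trivial inductive limit action. \textbf{Existence}: every contractive homomorphism $\gm\colon Ell(\af_C)\to Ell(\bt_D)$ (and unital, in the unital case) equals $Ell(\phi)$ for some equivariant $\phi\colon C\to D$. \textbf{Uniqueness}: if $\phi,\psi\colon C\to D$ are equivariant with $Ell(\phi)=Ell(\psi)$, then after composing with a connecting map into a later stage $D_{m'}$ they become conjugate by one $G$-invariant unitary in the unitalization of $D_{m'}$.

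To prove the lemma I would first reduce to a finite-dimensional target. A morphism out of $Ell(\af_C)$ is pinned down by the finitely many elements $\gm^{\bar\ld_j}([1]_j)\in K_0^{\bar\ld_j}(\bt_D)$, one per summand $M_{n_j}$ of $C$: partial-action compatibility forces the restriction of $\gm$ to the $j$-th summand in degree $\ld$ to be the pairing $\otimes[\theta_j]$ with $[\theta_j]=\gm^{\bar\ld_j}([1]_j)$, the trivial representation lying in $R^{\ld_j\bar\ld_j}(G)=R(G)$. By sequential continuity (Proposition \ref{sc}) these finitely many elements, together with the unitality or contractivity constraint linking them, live at a single finite stage $D_m=\bigoplus_k M_{p_k}$, $\bt_{D_m,k}=\Ad\,\eta_k$. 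The decisive point is that an equivariant homomorphism $C\to M_{p_k}$ is, up to $G$-invariant unitary equivalence, precisely a decomposition $\eta_k\cong\bigoplus_j\theta_{j,k}\otimes\rho_j\oplus\theta_{0,k}$ into cocycle representations with $\theta_{j,k}$ a $\nu_k\bar\ld_j$-representation, and that $Ell$ of such a homomorphism both determines and is determined by the classes $[\theta_{j,k}]\in V^{\nu_k\bar\ld_j}(G)$. Uniqueness per block then follows since an intertwiner $\theta_{j,k}\to\theta'_{j,k}$ tensored with $1_{\rho_j}$ assembles into a $G$-invariant conjugating unitary; existence per block follows since ``unital'' forces $\bigoplus_j\theta_{j,k}\otimes\rho_j\cong\eta_k$ (so $\theta_{0,k}=0$), while ``contractive'' forces $\bigoplus_j\theta_{j,k}\otimes\rho_j$ to be an honest subrepresentation of $\eta_k$, each case yielding an equivariant $\phi_k$. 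Assembling over $k$ (Proposition \ref{ds}) and over stages gives the lemma.

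Granting the lemma, the proof ends in the standard way: from $T\colon Ell(\af)\cong Ell(\bt)$ build equivariant $\phi_i\colon A_{n_i}\to B_{m_i}$ and $\psi_i\colon B_{m_i}\to A_{n_{i+1}}$ realizing $T$ and $T^{-1}$ on growing subgroups, use uniqueness to conjugate $\psi_i\phi_i$ and $\phi_{i+1}\psi_i$ by $G$-invariant unitaries (after passing to later stages) until the telescope commutes exactly, and pass to the limit; since the correction unitaries are $G$-invariant, conjugation by them preserves equivariance, so the induced isomorphism $A\cong B$ intertwines $\af$ and $\bt$. The non-unital case is identical with $[1_\af]$ replaced by the scale $D(\af)$ and ``unital'' by ``contractive''. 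The main obstacle is the lemma itself, specifically the claim that the whole package $Ell(\phi)$ — all $K_0^\ld$ with all partial actions $R^\mu(G)\times K_0^\ld\to K_0^{\mu\ld}$ — is exactly equivalent to the cocycle-representation data $([\theta_{j,k}])$: recovering each $[\theta_{j,k}]$ needs the contragredient/trivial-representation trick inside $K_0^{\bar\ld_j}$, and one must verify that the infinitely many cocycle degrees impose no extra relation. A second subtlety is that uniqueness genuinely must be stated up to $G$-invariant unitary equivalence \emph{after stabilizing}: a single unitary conjugating two equivariant homomorphisms need not be $G$-invariant, since the relevant relative commutant can carry a nontrivial unitary $1$-cocycle, but once one has passed to a stage where the isotypic decompositions literally agree the conjugating unitary can be chosen $G$-invariant, which is exactly what lets the equivariant intertwining close up.
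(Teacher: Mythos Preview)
Your proposal is correct and follows essentially the same route as the paper: functoriality for the easy direction, and for the converse an Elliott intertwining built on the existence/uniqueness package for equivariant maps out of finite-dimensional stages (the paper's Propositions \ref{homma}, \ref{nuhomma}, \ref{homfd}), with the pull-back to finite stages effected exactly via your ``image of the trivial representation'' observation (the paper's Remark \ref{rmkhomma}). One small over-caution: you do not need to stabilize for uniqueness---when $T_1,T_2\colon(M_n,\af)\to(M_k,\bt)$ satisfy $T_1^\ast=T_2^\ast$, writing $\eta=U_i(\rho\otimes\sigma_i)U_i^\ast$ and taking an intertwiner $V$ of the (equivalent) $\bar\ld_\af\ld_\bt$-representations $\sigma_1,\sigma_2$, the unitary $W=U_2(1\otimes V)U_1^\ast$ already satisfies $\bt_g(W)=W$ and $\Ad W\circ T_1=T_2$, so the paper's Proposition \ref{homfd} holds with a $G$-invariant conjugating unitary at the finite-dimensional level, and the intertwining closes up without passing to a later stage.
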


Just like classification of AF algebras, the proof is divided into two steps. The first step is to establish an existence theorem and a uniqueness theorem for actions on finite dimensional C*-algebras. The second step is to use the intertwining argument to get the desired isomorphisms. Since we are assuming that the actions on finite dimensional C*-algebras are spectrally trivial, they are direct sum of actions on each simple summand. So let's look at what happens for actions on matrix algebras first.\\
\begin{prp}
 Let $\af\colon G\cra M_n$ and $\bt \colon G\cra M_k$ be two actions. Suppose that they are induced by some cocycle representations $\pi_\af$ and $\pi_\bt$, respectively. Then there is a nontrivial equivariant homomorphism between the two $G$-C*-algebras if and only if there is some cocycle representation $\pi_\gm$ and some nonzero projection $p \in M_k$, invariant under $\pi_\bt$, such that $\pi_\bt\,\vert\,_p$ is equivalent to $\pi_\af \otimes \pi_\gm$ .
\end{prp}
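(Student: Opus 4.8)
The plan is to use the structure theory of $*$-homomorphisms between matrix algebras. Write $\af=\Ad\,\pi_\af$ and $\bt=\Ad\,\pi_\bt$, where $\pi_\af\colon G\to U_n$ is a $\ld_\af$-representation and $\pi_\bt\colon G\to U_k$ is a $\ld_\bt$-representation. Recall that every nonzero $*$-homomorphism $\phi\colon M_n\to M_k$ has the form $\phi(x)=v(x\otimes 1_W)v^{\ast}$ for a nonzero finite-dimensional Hilbert space $W$ and an isometry $v\colon\C^n\otimes W\to\C^k$; set $p=vv^{\ast}=\phi(1_n)$. The first observation is that if $\phi$ is equivariant then, evaluating $\phi\circ\af_g=\bt_g\circ\phi$ at $x=1_n$, we obtain $p=\pi_\bt(g)\,p\,\pi_\bt(g)^{\ast}$ for all $g$; so $p$ is automatically invariant under $\pi_\bt$, and $\pi_\bt|_p$, the restriction of the unitaries $\pi_\bt(g)$ to the invariant subspace $p\C^k$, is a well-defined $\ld_\bt$-representation. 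This also explains why the projection $p$ must appear in the statement: an equivariant homomorphism between matrix algebras need not be unital.

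For the forward implication, given such a $\phi$, I would transport $\pi_\bt|_p$ through $v$ to $\C^n\otimes W$ by setting $\sigma(g)=v^{\ast}\pi_\bt(g)v$; using that $p=vv^{\ast}$ commutes with each $\pi_\bt(g)$, one checks that $\sigma$ is a $\ld_\bt$-representation on $\C^n\otimes W$ unitarily equivalent via $v$ to $\pi_\bt|_p$. Conjugating the equivariance identity by $v$ and using $v^{\ast}v=1$ turns it into $(\pi_\af(g)\otimes 1_W)(x\otimes 1_W)(\pi_\af(g)\otimes 1_W)^{\ast}=\sigma(g)(x\otimes 1_W)\sigma(g)^{\ast}$ for all $x\in M_n$. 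Hence the unitary $\sigma(g)^{\ast}(\pi_\af(g)\otimes 1_W)$ commutes with $M_n\otimes 1_W$, so it lies in the commutant $(M_n\otimes 1_W)'=1_n\otimes L(W)$ and equals $1_n\otimes\mu(g)$ for a unique $\mu(g)\in U(W)$; therefore $\sigma(g)=\pi_\af(g)\otimes\mu(g)^{\ast}$. Set $\pi_\gm(g)=\mu(g)^{\ast}$. A short computation, using that $\pi_\af$ is a $\ld_\af$-representation and $\sigma$ a $\ld_\bt$-representation, shows that $\pi_\gm$ is a cocycle representation with cocycle $\ld_\bt\overline{\ld_\af}$, and its Borel measurability follows from that of $\pi_\af$ and $\pi_\bt$ since $v$ is a fixed operator. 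Thus $\pi_\bt|_p\cong\sigma=\pi_\af\otimes\pi_\gm$, as desired.

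For the converse, suppose we are given a cocycle representation $\pi_\gm$ on a space $W$, a nonzero $\pi_\bt$-invariant projection $p\in M_k$, and a unitary $w\colon\C^n\otimes W\to p\C^k$ with $w(\pi_\af(g)\otimes\pi_\gm(g))=\pi_\bt(g)w$ for all $g$, implementing the equivalence $\pi_\af\otimes\pi_\gm\cong\pi_\bt|_p$. Composing $w$ with the inclusion $p\C^k\hookrightarrow\C^k$ yields an isometry $v\colon\C^n\otimes W\to\C^k$ with $vv^{\ast}=p$ and $v(\pi_\af(g)\otimes\pi_\gm(g))=\pi_\bt(g)v$. I would then define $\phi(x)=v(x\otimes 1_W)v^{\ast}$, a nonzero $*$-homomorphism $M_n\to M_k$, and verify equivariance directly: since $(\pi_\af(g)\otimes\pi_\gm(g))(x\otimes 1_W)(\pi_\af(g)\otimes\pi_\gm(g))^{\ast}=\pi_\af(g)x\pi_\af(g)^{\ast}\otimes 1_W$, the intertwining relation for $v$ gives $\phi(\pi_\af(g)x\pi_\af(g)^{\ast})=\pi_\bt(g)\phi(x)\pi_\bt(g)^{\ast}$, so $\phi$ is equivariant and nontrivial.

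I do not expect a substantial obstacle: the argument is essentially bookkeeping around the structure theorem for $*$-homomorphisms of matrix algebras and the commutation theorem. The points that require care are recording the non-unitality of equivariant homomorphisms (so that $p$ genuinely enters the statement), checking the Borel measurability of the cocycle representation $\pi_\gm$ extracted in the forward direction, and tracking which $2$-cocycle each projective representation satisfies — the cocycle of $\pi_\gm$ is necessarily $\ld_\bt\overline{\ld_\af}$, which is consistent with $\pi_\af\otimes\pi_\gm$ being, like $\pi_\bt|_p$, a $\ld_\bt$-representation.
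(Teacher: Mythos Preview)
Your proof is correct and rests on the same structural fact as the paper's, namely that the commutant of $M_n\otimes 1$ in $M_n\otimes L(W)$ is $1_n\otimes L(W)$; equivalently, that $T(M_n)'$ inside $pM_kp$ is a full matrix algebra and $pM_kp\cong T(M_n)'\otimes M_n$. The paper argues the forward implication slightly differently: after reducing to the unital case it takes the commutant $A=T(M_n)'\subset pM_kp$, observes that $\bt$ restricts to an action on $A\cong M_r$, picks an arbitrary cocycle representation $\pi_\gm$ inducing $\bt|_A$, and then must correct by a scalar function $\mu_g$ at the end so that $\pi_\af\otimes(\mu\pi_\gm)$ matches $\pi_\bt|_p$ on the nose. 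Your Hilbert-space approach, transporting $\pi_\bt|_p$ to $\sigma(g)=v^\ast\pi_\bt(g)v$ and reading off $\pi_\gm$ directly from the factorisation $\sigma(g)=\pi_\af(g)\otimes\pi_\gm(g)$, avoids this last adjustment and makes the cocycle $\ld_\bt\overline{\ld_\af}$ of $\pi_\gm$ visible immediately. The trade-off is that the paper's commutant formulation generalises more transparently to the non-simple finite-dimensional setting used later in Proposition~\ref{homfd}, whereas your argument is tied to a fixed isometry $v$; but for the present statement the two are equivalent and your version is arguably cleaner.
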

\begin{proof} One direction is easy: if such cocycle representation exist, then the map $a \rightarrow U p(a\otimes1)pU ^\ast$ is equivariant, where $U$ is a unitary implementing the equivalence between $\pi_\bt\,\vert\,p$ and $\pi_\af \otimes\pi_\gm$. For the other direction, let $T$ be a nontrivial equivariant homomorphism. Set $p = T(1)$, which is a nonzero projection in $M_k$. It's easy to check that $p$ is invariant under $\bt$. Restricting the action to $\bt\,\vert\, pM_kp$, we may without loss of generality assume that $T$ is unital.\\
Let $A = T (M_n )^{\prime}$ , the commutant of $T(M_n)$. Using the fact that $T$ is equivariant, we can show that $A$ is a subalgebra of $M_k$ invariant under $\bt$. Consider the map $S \colon A \otimes M_n \rightarrow M_k$ , induced by $S(a \otimes b) = aT(b)$. This map is an equivariant homomorphism if we define the action on $A\otimes M_n$ by $(\bt\,\vert\,_A) \otimes \af$. Now it's elementary to check that $A$ is actually a matrix algebra, say isomorphic to $M_r$, such that $M_r \otimes M_n \cong M_k$ . (In particular, $S$ is an isomorphism). Let $\pi_\gm$ be a cocycle representation on $M_r$ inducing $\bt\,\vert\,_A$ under the isomorphism of $A$ and $M_r$. Let $\gm$ be the action on $M_r$ induced by $\pi_\gm$ . Then up to conjugate, $\bt$ and $\af \otimes \gm$ are the same. Hence
\beq
\pi_\bt = \mu_g U (\pi_\af \otimes \pi_\gm )U ^\ast ,\quad \text{for some\,\,} U \in M_k \text{\,\,and\,\,} \mu_g \in \T.
\eeq
Replacing $\pi_\gm$ by $\mu\pi_\gm$, we get the desired cocycle representation.
\end{proof}
\begin{prp}\label{homma}
 Let $\af \colon G \cra M_n$ and $\bt \colon G \cra M_k$ be two actions. Then for any unital homomorphism $\Gm \colon Ell(\af) \rightarrow Ell(\bt)$, there is an unital equivariant homomorphism $T \colon M_n \rightarrow M_k$ such that $T^\ast = \Gm$. Furthermore, if $T_1$ and $T_2$ are two unital equivariant homomorphism such that $T_1^\ast = T_2^\ast$ , then $T_1$ and $T_2$ are conjugate by some unitary.
\end{prp}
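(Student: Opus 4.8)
The plan is to reduce everything, via Proposition~\ref{k0ma}, to statements about cocycle representations, and then invoke the previous Proposition. Write $\af$ as being induced by a $\ld_\af$-representation $\pi_\af$ and $\bt$ by a $\ld_\bt$-representation $\pi_\bt$. By Proposition~\ref{k0ma} we have, for every 2-cocycle $\ld$, order isomorphisms $K_0^\ld(\af)\cong R^{\ld_\af\ld}(G)$ and $K_0^\ld(\bt)\cong R^{\ld_\bt\ld}(G)$ under which every partial action becomes the tensor-product pairing of cocycle representation groups, $[1_\af]$ corresponds to $[\pi_\af]$, and $[1_\bt]$ corresponds to $[\pi_\bt]$. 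I will use freely that cocycle representations of a compact group are completely reducible, so each $V^\ld(G)$ is a free abelian monoid and hence injects into its Grothendieck group $R^\ld(G)$.

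\emph{Existence.} Given a unital homomorphism $\Gm$, set $\pi_\gm := \Gm^{\bar{\ld}_\af}\big([\pi_t]\big)$, where $[\pi_t]$ is the class of the trivial representation regarded as an element of $K_0^{\bar{\ld}_\af}(\af)\cong R(G)$. Its image lies in $K_0^{\bar{\ld}_\af}(\bt)\cong R^{\bar{\ld}_\af\ld_\bt}(G)$, and since $\Gm$ preserves positive cones, $\pi_\gm$ is the class of an honest $\bar{\ld}_\af\ld_\bt$-representation. The crucial claim is $\pi_\bt\cong\pi_\af\otimes\pi_\gm$. To prove it, apply the commuting square that expresses compatibility of $\Gm$ with the partial action of $R^{\ld_\af}(G)$, in the case $\ld_1=\ld_\af$, $\ld_2=\bar{\ld}_\af$, to the element $[\pi_t]\in K_0^{\bar{\ld}_\af}(\af)$ and the representation $\pi_\af\in R^{\ld_\af}(G)$: since $[\pi_\af]\otimes[\pi_t]=[1_\af]$ in $K_0(\af)$ and $\Gm$ is unital, commutativity yields $[1_\bt]=[\pi_\af]\otimes[\pi_\gm]$ in $K_0(\bt)$, that is, $[\pi_\bt]=[\pi_\af\otimes\pi_\gm]$ in $R^{\ld_\bt}(G)$; the embedding of $V^{\ld_\bt}(G)$ then gives $\pi_\bt\cong\pi_\af\otimes\pi_\gm$ (and in particular $n\mid k$, since $\pi_\gm\neq 0$). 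Feeding $\pi_\gm$ into the previous Proposition with $p=1$ yields a unital equivariant $T\colon M_n\to M_k$. It remains to check $T^\ast=\Gm$: writing $T=(\Ad U)\circ\iota$ with $\iota(b)=b\otimes 1_r$ and $U$ a unitary with $\pi_\bt = U(\pi_\af\otimes\pi_\gm)U^\ast$, and unwinding the definitions together with Proposition~\ref{k0ma}, one finds $T^\ast$ acts as $x\mapsto x\otimes[\pi_\gm]$ on each $K_0^\ld$; iterating the partial-action squares for $\Gm$ (now with $\ld_2=\bar{\ld}_\af$, $\ld_1=\ld_\af\ld$, evaluated at $[\pi_t]$) shows $\Gm^\ld$ is the same operation, so $T^\ast=\Gm$.

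\emph{Uniqueness.} Suppose $T_1,T_2\colon M_n\to M_k$ are unital equivariant with $T_1^\ast=T_2^\ast$. By the construction in the previous Proposition, each $T_i$ factors as $M_n\xrightarrow{b\mapsto b\otimes 1_r} M_n\otimes M_r\xrightarrow{S_i}M_k$, where $S_i$ is an equivariant isomorphism onto $(M_k,\bt)$ and the action on the second factor $M_r$ is induced by a cocycle representation $\pi_{\gm_i}$. As in the existence step, $T_i^\ast=S_i^\ast\circ\iota^\ast$ and computing via Proposition~\ref{k0ma} shows $T_i^\ast$ sends $[\pi_t]\in K_0^{\bar{\ld}_\af}(\af)$ to $[\pi_{\gm_i}]$; hence $T_1^\ast=T_2^\ast$ forces $[\pi_{\gm_1}]=[\pi_{\gm_2}]$ and so $\pi_{\gm_1}\cong\pi_{\gm_2}$. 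Choosing a unitary $z\in M_r$ with $\pi_{\gm_2}=z\pi_{\gm_1}z^\ast$ and replacing $S_2$ by $S_2\circ(\id\otimes\Ad z)$, which leaves $T_2$ unchanged since $\Ad z$ fixes $1_r$, we may assume $S_1$ and $S_2$ are equivariant isomorphisms out of the \emph{same} $G$-C*-algebra $M_n\otimes M_r$. Fixing an isomorphism $M_n\otimes M_r\cong M_k$ of C*-algebras, write $S_i=\Ad U_i$ for unitaries $U_i\in M_k$; then $T_1$ and $T_2$ are the common map $b\mapsto b\otimes 1_r$ followed by $\Ad U_1$, respectively $\Ad U_2$, so $w:=U_2U_1^\ast$ satisfies $\Ad w\circ T_1=T_2$.

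The main obstacle is the heart of the existence argument: showing that a homomorphism of the twisted equivariant K-theory invariant which respects all the partial actions is forced to be ``tensor with a single cocycle representation'' $\pi_\gm$, and that $\pi_\af\otimes\pi_\gm$ recovers $\pi_\bt$. This is precisely the rigidity that the partial actions supply and that bare equivariant $K_0$ lacks (compare Example~\ref{fdce}). The remaining pieces — the identification $T^\ast=(\cdot)\otimes[\pi_\gm]$ and the conjugacy bookkeeping in the uniqueness half — are routine once Proposition~\ref{k0ma} and the previous Proposition are available, but they require care with the tensor-factor identifications.
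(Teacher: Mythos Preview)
Your proof is correct and follows essentially the same line as the paper's: both reduce via Proposition~\ref{k0ma} to cocycle representation groups, set $\pi_\gm:=\Gm^{\overline{\ld_\af}}([\pi_t])$, and use compatibility with the partial actions to show that every $\Gm^\ld$ is ``tensor with $[\pi_\gm]$'', after which the existence of $T$ and $T^\ast=\Gm$ are immediate. The only notable difference is in the uniqueness half: the paper evaluates at $[1_\af]$ to obtain $\pi_\af\otimes\pi_{\gm_1}\cong\pi_\af\otimes\pi_{\gm_2}$ and concludes in one line, whereas you evaluate at $[\pi_t]$ to get the sharper statement $\pi_{\gm_1}\cong\pi_{\gm_2}$ and then carry out the explicit unitary bookkeeping; your version is more detailed but not a different argument.
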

\begin{proof}
Suppose $\af$ is induced by a $\ld_\af$-representation $\pi_\af$, $\bt$ is induced by a $\ld_\bt$ -representation $\pi_\bt$. By Proposition \ref{k0ma}, there is an ordered group isomorphism between $K_0^\ld(\af)$ and $R^{\ld\ld_\af} (G)$, for each 2-cocycle $\ld$. Furthermore, under these isomorphisms, the partial action of $R_0^\ld(G)$ on $K_0^\ld(\af)$ becomes the pairing between $R^\ld(G)$ and $R^{\ld\ld_\af} (G)$ (induced by tensor product). The special element $[1_\af]$ becomes $[\pi_\af]$. Similar conclusion holds for $\bt$. We now claim that, under these isomorphisms, there is some $\overline{\ld_\af}\ld_\bt$-representation $\pi_\gm$, such that the homomorphisms $\Gm$ becomes
\beq
\Gm^\ld \colon R^{\ld\ld_\af} (G) \rightarrow R^{\ld\ld_\bt} (G), \quad\Gm^\ld ([\pi]) = [\pi \otimes \pi_\gm ]
\eeq
To see this, consider the map 
\beq
\Gm^{\overline{\ld_\af}} \colon R^{\overline{\ld_\af}\ld_\af} (G)=R(G) \rightarrow R^{\overline{\ld_\af} \ld_\bt} (G).
\eeq
Let $\pi_t$ be the trivial representation. Then $[\pi_t] \in R(G)$. Choose $\pi_\gm$ such that $[\pi_\gm] = \Gm^{\overline{\ld_\af}} [\pi_t]$. Now let $[\pi] \in R^{\ld\ld_\af} (G)$ be an arbitrary element. Since the homomorphisms $\Gm^\ld$ is compatible with the partial actions, we have
\beq
\Gm^\ld ([\pi]) =[\pi]\Gm^{\ld} ([\pi_t ]) = [\pi][\pi_\gm] = [\pi \otimes \pi_\gm ],
\eeq
which completes the proof of our claim.\\
Since $[\pi_\af \otimes \pi_\gm ] = [\pi_\bt ]$, we see that $\pi_\af \otimes \pi_\gm$ and $\pi_\bt$ are equivalent. Now let $\gm \colon G \cra M_r$ be the action induced by $\pi_\gm$ . We have $M_n\otimes M_r\cong M_k$. The map 
\beq
M_n \rightarrow M_n \otimes M_r,\quad a\rightarrow a \otimes 1
\eeq
induces an equivariant map $T \colon M_n \rightarrow M_k$. From the construction of $T$ we see that $T^\ast$ is induced by tensor with $[\pi_{\gm}]$,  hence $T ^\ast = \Gm$.\\
Now suppose $T_1$ and $T_2$ are two equivariant maps such that $T_1 ^\ast = T_2 ^\ast$ . From the above proof, we may assume that $T_i$ is induced by tensoring with some cocycle representations $\pi_{\gm_i}$ , for $i = 1, 2$. Since $T_1 ^\ast ([1_{\af}]) = T_2 ^\ast ([1_\af])$, we have that $\pi_\af \otimes \pi_{\gm_1}$ and $\pi_\af \otimes \pi_{\gm_2}$ are equivalent. Therefore $T_1$ and $T_2$ are conjugate by some unitary.
\end{proof}

\begin{rmk}\label{rmkhomma}
From the proof of the above proposition, we can actually see that for any homomorphism $\Gm\colon Ell(G, M_n, \af)\rightarrow Ell(G, B, \bt)$, the homomorphism is entirely determined by the image of $[\pi_t]$, where $[\pi_t]$ is the equivalent class of the trivial representation, viewed as an element of $K^{\overline{\ld_\af}}(G, M_n, \af)$ under the isomorphism $K^{\overline{\ld_\af}}(G, M_n, \af)\cong R(G)$.
\end{rmk}
One can also get a non-unital version from the above proof.
\begin{cor}\label{nuhomma}
Let $\af \colon G \cra M_n$ and $\bt \colon G\cra M_k$ be two actions. Let
\beq
\Gm \colon Ell(\af) \rightarrow Ell(\bt)
\eeq
be a contractive homomorphism. Then there is an equivariant homomorphism $T \colon M_n \rightarrow M_k$ such that $T ^\ast = \Gm$. Furthermore, if $T_1$ and $T_2$ are two equivariant homomorphisms such that $T_1 ^\ast = T_2 ^\ast$ , then $T_1$ and $T_2$ are conjugate by some unitary.
\end{cor}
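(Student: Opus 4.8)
The plan is to mimic the proof of Proposition~\ref{homma} essentially line by line, isolating the two places where unitality of $\Gm$ was used and replacing them by contractivity together with the automatic positivity of the maps $\Gm^\ld$. As a preliminary, recall from Section~2 that every action on a matrix algebra is $\Ad$ of a cocycle representation, so I may fix $\ld_\af$- and $\ld_\bt$-representations $\pi_\af,\pi_\bt$ with $\af=\Ad\pi_\af$ and $\bt=\Ad\pi_\bt$, and use Proposition~\ref{k0ma} to identify $K_0^\ld(\af)\cong R^{\ld\ld_\af}(G)$ and $K_0^\ld(\bt)\cong R^{\ld\ld_\bt}(G)$ as ordered groups, carrying partial actions to tensor pairings and $[1_\af],[1_\bt]$ to $[\pi_\af],[\pi_\bt]$. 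Under these identifications the scale $D(\af)$ becomes $\{x\in R^{\ld_\af}(G)_+ : x\le[\pi_\af]\}$, which, since a cocycle representation of a compact group decomposes uniquely into irreducibles, is precisely the set of classes of subrepresentations of $\pi_\af$; likewise for $\bt$.

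For existence, I would first produce $\pi_\gm$ exactly as in Proposition~\ref{homma}: set $[\pi_\gm]:=\Gm^{\overline{\ld_\af}}([\pi_t])\in R^{\overline{\ld_\af}\ld_\bt}(G)$, where $\pi_t$ is the trivial representation viewed in $K_0^{\overline{\ld_\af}}(\af)\cong R(G)$. Here is the first divergence from the unital case: instead of invoking $\Gm([1_\af])=[1_\bt]$ to see a posteriori that $[\pi_\gm]$ is represented by a genuine representation, I note that $\Gm^{\overline{\ld_\af}}$ is a homomorphism of \emph{ordered} groups and $[\pi_t]\ge 0$, so $[\pi_\gm]\ge 0$ already, i.e.\ $\pi_\gm$ is an honest $\overline{\ld_\af}\ld_\bt$-representation; let $\gm=\Ad\pi_\gm\colon G\cra M_r$ with $r=\dim\pi_\gm$. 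Compatibility of $\Gm$ with the partial actions then gives $\Gm^\ld([\pi])=[\pi\otimes\pi_\gm]$ for all $\ld$ and all $[\pi]$, exactly as recorded in Remark~\ref{rmkhomma}. The second divergence: since $[1_\af]\in D(\af)$ and $\Gm$ is contractive, $\Gm([1_\af])=[\pi_\af\otimes\pi_\gm]$ lies in $D(\bt)$, hence $\pi_\af\otimes\pi_\gm$ is equivalent to a subrepresentation of $\pi_\bt$; concretely there is a $\bt$-invariant projection $q\in M_k$ and an isomorphism $M_n\otimes M_r\cong qM_kq$ intertwining $\af\otimes\gm$ with $\bt|_{qM_kq}$. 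Composing $a\mapsto a\otimes 1$ with this isomorphism and with the inclusion $qM_kq\hookrightarrow M_k$ yields an equivariant $T\colon M_n\to M_k$ whose induced map is tensoring with $[\pi_\gm]$, so $T^\ast=\Gm$.

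For uniqueness, given equivariant $T_1,T_2$ with $T_1^\ast=T_2^\ast$, I would set $q_i:=T_i(1_{M_n})$, a $\bt$-invariant projection, and run the commutant computation of the proposition preceding~\ref{homma} inside $q_iM_kq_i$: the commutant of $T_i(M_n)$ there is a $\bt$-invariant matrix algebra $M_{r_i}$ with $M_n\otimes M_{r_i}\cong q_iM_kq_i$ and $\bt|_{M_{r_i}}=\Ad\pi_{\gm_i}$ for some cocycle representation $\pi_{\gm_i}$; thus $T_i$ is unitarily conjugate in $M_k$ to the standard embedding of $M_n$ into $M_n\otimes M_{r_i}\cong q_iM_kq_i$, with $T_i^\ast$ equal to tensoring with $[\pi_{\gm_i}]$. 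Then $T_1^\ast=T_2^\ast$ forces $[\pi_{\gm_1}]=[\pi_{\gm_2}]$ by Remark~\ref{rmkhomma}, hence $\pi_{\gm_1}\cong\pi_{\gm_2}$, and a unitary implementing this equivalence conjugates the two standard embeddings, so $T_1$ and $T_2$ are conjugate by a unitary in $M_k$.

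I expect the only genuinely new point, compared with Proposition~\ref{homma}, to be recognizing that \emph{contractive} supplies exactly what \emph{unital} did there: positivity of $\Gm^{\overline{\ld_\af}}$ makes $\pi_\gm$ a true representation, and preservation of the scale forces $\pi_\af\otimes\pi_\gm$ into $\pi_\bt$, which is what guarantees the target corner $qM_kq$ exists. Everything else — the commutant argument and the bookkeeping with unitaries — is identical to the unital case, so the write-up should be short.
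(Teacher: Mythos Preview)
Your proposal is correct and matches the paper's approach exactly: the paper gives no separate argument for the corollary at all, merely remarking that ``one can also get a non-unital version from the above proof,'' and what you have written is precisely that adaptation of the proof of Proposition~\ref{homma}. The one place you are slightly terse is the final clause of the uniqueness argument---the unitary implementing $\pi_{\gm_1}\cong\pi_{\gm_2}$ gives an equivariant isomorphism $q_1M_kq_1\to q_2M_kq_2$, and you should note that since $\pi_\bt|_{q_1^\perp}\cong\pi_\bt|_{q_2^\perp}$ as well, this extends to an (inner) equivariant automorphism of $M_k$---but this is routine and well within the spirit of the paper's own level of detail.
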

We can now extend Proposition \ref{homma} to actions on finite dimensional C*-algebras which are spectrally trivial:
\begin{prp}\label{homfd}
 Let $A$, $B$ be two finite dimensional C*-algebras, let $\af \colon G\cra A$ and $\bt \colon G \cra B$ be two spectrally trivial actions. Then for any contractive homomorphism $\Gm \colon Ell(\af) \rightarrow Ell(\bt)$, there is an equivariant homomorphism $T \colon A \rightarrow B$ such that $T ^\ast = \Gm$. We can choose $T$ to be unital if $\Gm$ is unital. Furthermore, if $T_1$ and $T_2$ are two equivariant homomorphism such that $T_1 ^\ast = T_2 ^\ast$ , then $T_1$ and $T_2$ are conjugate by some unitary.
\end{prp}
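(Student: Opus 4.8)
The plan is to reduce to the matrix-algebra case (Corollary \ref{nuhomma}) summand by summand and then reassemble the pieces into a single equivariant homomorphism; the reassembly for the existence half is the only nontrivial point. Write $A=\bigoplus_i M_{n_i}$ and $B=\bigoplus_j M_{k_j}$. Since $\af$ and $\bt$ are spectrally trivial, $\af$ restricts to an action $\af_i$ on each $M_{n_i}$ and $\bt$ to an action $\bt_j$ on each $M_{k_j}$, each such action is induced by a cocycle representation by the discussion in Section 2, and $A^G=\bigoplus_i (M_{n_i})^{\af_i}$. By Proposition \ref{ds}, $Ell(\af)=\bigoplus_i Ell(\af_i)$ and $Ell(\bt)=\bigoplus_j Ell(\bt_j)$ as ordered groups compatibly with the partial actions, and the scale $D(\af)$ is the direct sum of the $D(\af_i)$ (similarly for $\bt$). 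Hence a contractive homomorphism $\Gm$ is a matrix $(\Gm_{ij})$, and reading off the $(i,j)$-entry of the commuting squares in the definition of a homomorphism of $Ell$, and of the scale inclusion $\Gm(D(\af))\subseteq D(\bt)$, shows that each $\Gm_{ij}\colon Ell(\af_i)\to Ell(\bt_j)$ is again a contractive homomorphism.

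For existence, apply Corollary \ref{nuhomma} to get, for each $i,j$, an equivariant homomorphism $T_{ij}\colon M_{n_i}\to M_{k_j}$ with $T_{ij}^\ast=\Gm_{ij}$; by the proof of Proposition \ref{homma} we may take $T_{ij}$ to be of the form ``tensoring by a cocycle representation'', so its range projection $q_{ij}=T_{ij}(1_{M_{n_i}})$ is $\bt_j$-invariant. For fixed $j$ these $q_{ij}$ need not be mutually orthogonal, so $\bigoplus_i T_{ij}$ is not yet defined. To remedy this, recall from Proposition \ref{k0ma} that $[(q_{ij},\pi_t)]\in K_0(\bt_j)\cong R^{\ld_{\bt_j}}(G)$ is the class of the subrepresentation of $\pi_{\bt_j}$ carried by $q_{ij}$; contractivity of $\Gm$ forces $\sum_i[(q_{ij},\pi_t)]\le[1_{\bt_j}]$, so by complete reducibility of cocycle representations of the compact group $G$ we may decompose $M_{k_j}$ by mutually orthogonal $\bt_j$-invariant projections $q_{ij}'$ carrying subrepresentations of $\pi_{\bt_j}$ equivalent to those carried by the $q_{ij}$, together with $\bt_j$-invariant partial isometries $s_{ij}$ satisfying $s_{ij}^\ast s_{ij}=q_{ij}$ and $s_{ij}s_{ij}^\ast=q_{ij}'$. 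Then $T_{ij}':=\Ad(s_{ij})\circ T_{ij}$ is equivariant with range $q_{ij}'$, and since $s_{ij}$ is $\bt_j$-invariant the corner inclusions of $q_{ij}M_{k_j}q_{ij}$ and $q_{ij}'M_{k_j}q_{ij}'$ into $M_{k_j}$ agree once $\Ad(s_{ij})$ is interposed, so $(T_{ij}')^\ast=\Gm_{ij}$ still. Now $T^{(j)}:=\bigoplus_i T_{ij}'\colon A\to M_{k_j}$ is a well-defined equivariant homomorphism whose induced map is the $j$-th component $(\Gm_{ij})_i$ of $\Gm$, and $T:=\bigoplus_j T^{(j)}$ is the map we want; when $\Gm$ is unital the inequality above is an equality, so the $q_{ij}'$ exhaust $1_{M_{k_j}}$ and $T$ is unital.

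For uniqueness, let $T_1,T_2\colon A\to B$ be equivariant with $T_1^\ast=T_2^\ast$, and write $T_{\ell,ij}\colon M_{n_i}\to M_{k_j}$ for the restriction of $T_\ell$ to the $i$-th source summand and $j$-th target summand; then $T_{1,ij}^\ast=T_{2,ij}^\ast$, so Corollary \ref{nuhomma} gives a unitary $u_{ij}\in M_{k_j}$ with $\Ad(u_{ij})\circ T_{1,ij}=T_{2,ij}$. Fixing $j$, the range projections of the $T_{1,ij}$, and those of the $T_{2,ij}$, are mutually orthogonal, and since $T_1^\ast=T_2^\ast$ the corresponding $K_0(\bt_j)$-classes coincide, so the complements $1-\sum_i T_{1,ij}(1_{M_{n_i}})$ and $1-\sum_i T_{2,ij}(1_{M_{n_i}})$ have equal rank; cutting each $u_{ij}$ down to the partial isometry from the range of $T_{1,ij}$ onto that of $T_{2,ij}$, adjoining any partial isometry between the two complements, and summing over $i$ yields a unitary $u_j\in M_{k_j}$ with $\Ad(u_j)\circ T_1^{(j)}=T_2^{(j)}$. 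Then $u:=\bigoplus_j u_j$ is a unitary conjugating $T_1$ to $T_2$.

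The main obstacle is the existence reassembly: Corollary \ref{nuhomma} determines the summandwise maps only up to conjugacy, the naive choices $T_{ij}$ generally fail to have orthogonal ranges within a fixed $M_{k_j}$, and conjugating an individual $T_{ij}$ by an arbitrary unitary destroys equivariance. The device that resolves this is that $\bt_j$ is itself induced by a cocycle representation, so invariant projections of $M_{k_j}$ and invariant partial isometries between them are governed entirely by the subrepresentations they carry, while contractivity of $\Gm$ supplies exactly the inequality of representation classes needed to orthogonalize the $q_{ij}$ inside the fixed-point algebra; the uniqueness reassembly is routine once one observes that the complementary projections have equal rank.
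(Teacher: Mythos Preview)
Your proof is correct and follows essentially the same route as the paper: decompose into matrix summands via Proposition \ref{ds}, apply Corollary \ref{nuhomma} to each block $\Gm_{ij}$, use contractivity to orthogonalize the range projections inside each $M_{k_j}$ by passing to equivalent subrepresentations of $\pi_{\bt_j}$, and reassemble. Your treatment of the orthogonalization step (producing $\bt_j$-invariant partial isometries $s_{ij}$ from complete reducibility of cocycle representations) and of the uniqueness step (cutting the $u_{ij}$ to partial isometries and filling in the complement) is more explicit than the paper's, which simply says ``conjugating suitable unitaries if necessary'' for existence and ``arguing as in Proposition \ref{homma}'' for uniqueness, but the substance is identical.
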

\begin{proof}
A finite dimensional C*-algebra is a direct sum of matrix algebras, and a spectrally trivial action is a direct sum of actions on each matrix algebra. Let $A = A_1 \oplus A_2 \oplus \cdots  A_l$ , let $B = B_1 \oplus B_2 \oplus \cdots B_s$ , where $A_i = M_{n_i}$ and $B_j = M_{k_j}$ are matrix algebras. Let $\af_i$ be the induced action on $A_i$ and $\bt_j$ be the induced action on $B_j$ . In view of Proposition \ref{ds}, we may write
\beq
Ell(\af) \cong \oplus_{i=1}^l Ell(G, A_i, \af_i ),\quad Ell(\bt) \cong \oplus_{j=1}^s Ell(G, B_j , \bt_j). 
\eeq
Now let $\Gm_{i,j}$ be the partial maps from $Ell(G, A_i , \af_i)$ to $Ell(G, B_j, \bt_j)$ induced by $\Gm$. Then by Corollary \ref{nuhomma}, there exists equivariant homomorphisms $\phi_{i,j} \colon A_i
 \rightarrow B_j$ such that $\phi ^\ast_{ i,j} = \Gm_{i,j}$. Since $T$ is contractive, we have
\beq
\oplus_{i=1}^l \Gm_{i, j} ([1_{\af_i}]) = [(p, \pi_t)],
\eeq
 where $\pi_t$ is the trivial representation and $p\in B_j$ is invariant under $\bt_j$. Under the identification of $K^{\ld}(G, B_j, \bt_j)$ and $R^{\ld\ld_{\bt_j}}(G)$, we can see that $\Gm_{i, j}([1_{\af_i}])$ are equivalent to subrepresentations of $\pi_{\bt_j}\,\vert\,_p$ such that the direct sum is equal to $\pi_{\bt_j}\,\vert\,_p$. Conjugating suitable unitaries if necessary, we can arrange that the homomorphisms $\phi_{i,j}$ have orthogonal range. Then the partial maps $\phi_{i,j}$ will give a homomorphism $\phi \colon A \rightarrow B$ such that $\phi^\ast = \Gm$. It is unital if $\Gm$ is. Arguing as in Proposition \ref{homma}, one can see that if $T_1$ and $T_2$ are two equivariant homomorphism such that $T_1 ^\ast = T_2 ^\ast$ , then $T_1$ and $T_2$ are conjugate by some unitary.
\end{proof}
We can now use the intertwining argument to prove our main result Theorem.

\begin{proof} (Proof of Theorem 4.1) Let $G$ be a compact group. Suppose that 
\beq
(G, A, \af) = \dlim (G, A_n , \af_n)\quad (G, B, \bt) =\dlim (G, B_n , \bt_n),
\eeq
where $A_n, B_n$ are finite dimensional C*-algebras and the actions $\af_n$ and $\bt_n$ are spectrally trivial. Let $\Gm \colon Ell(\af) \rightarrow Ell(\bt)$ be an isomorphism. By
Proposition \ref{sc}, we have
\beq
Ell(\af) \cong \dlim Ell(\af_n),\quad Ell(\bt) \cong \dlim Ell(\bt_n).
\eeq
Set $n_0=1$. Consider the homomorphism $\Gm_{n_0} \colon Ell(\af_{n_0}) \rightarrow Ell(\bt)$ which is the composition of the connecting map and $\Gm$. We want to show that this homomorphism pulls back to finite stage. Since $(G, A_{n_0}, \af_{n_0})$ is a finite direct sum of actions on matrix algebras, without loss of generality we may assume that $A_{n_0}$ is a matrix algebra. Suppose $\af_{n_0}$ is induced by some $\ld_{n_0}$-representation $\pi_{n_0}$. From Remark \ref{rmkhomma}, we see that $\Gm_{n_0}$ is determined by the image of $[\pi_t]$. Here $[\pi_t]$ is the equivalent class of the trivial representation, viewed as an element of $K^{\overline{\ld_{n_0}}}(\af)$ under the isomorphism
\beq
K^{\overline{\ld_{n_0}}}(\af)\cong R^{\overline{\ld_{n_0}}\ld_{n_0}}(G) =R(G).
\eeq
 The image of $[\pi_t]$ can certainly be pulled back to finite stage: there is some $m_0$ and certain element $c \in K_0^{\overline{\ld_n}} (G, B_{m_0} , \bt_{m_0})$ such that the image of $c$ in $K_0^\ld(\bt)$ is $\Gm^{\overline{\ld_{n_0}}} ([\pi_t])$.\\
Now for any 2-cocycle $\ld$, we can define a homomorphism
\beq
\Theta^\ld \colon K_0^\ld(G, A_{n_0} , \af_{n_0}) \rightarrow K_0^\ld(G, B_{m_0} , \bt_{m_0}),\quad \theta([\pi]) = [\pi]c.
\eeq
It's then easy to verify that $\{\Theta^{\ld}\}$ defines homomorphisms which are compatible with the partial actions by $R^\ld(G)$.\\

If $A$, $B$ are unital, enlarge $m_0$ if necessary, we may assume that $\Theta([1_{\af_{n_0}} ]) = [1_{\bt_{m_0}}]$. If $A$, $B$ are both non-unital, since $\Gm_{n_0}[1_{\af_{n_0}}]\in D(\bt)$, enlarge $m_0$ if necessary, we may assume that $\Theta([1_{\af_{n_0}}])=[(p, \pi_t)]$, for some $p\in B_{m_0}$ ($\pi_t$ is the trvial representation). In any case, we have the following commutative diagram.
\[
\begin{tikzcd}
Ell(\af_ {n_0})\arrow[dashed]{d}{\Theta} \arrow{rd}{\Gm_{n_0}}&\\
Ell(\bt_{m_0})\arrow{r} &Ell(\bt)
\end{tikzcd}
\]
While the homomorphisms in the above diagram are all contractive. Similarly, the map $\Gm^{-1}_{m_0} \colon Ell(\bt_{m_0} ) \rightarrow Ell(\af)$ can be pulled back to some $Ell(\af_{n_1})$. By Remark \ref{rmkhomma}, if $\gm \colon G \cra M_n$ is an action on matrix algebra and $\theta \colon G\cra C$ is an arbitrary action, then any homomorphism (not necessarily preserve the special element)
\beq
Ell(G, M_n , \gm) \rightarrow Ell(G, C, \theta)
\eeq
is determined by the image of the trivial representation. Enlarge $n_1$ if necessary, we can match the images of the trivial representations in the following diagram, hence making it commutative:
\[
\begin{tikzcd}
Ell(\af_ {n_0})\arrow{r}\arrow[dashed]{rd}& Ell(\af_{n_1} )\arrow{r} & Ell(\af_{n_1} ) \\
 &Ell(\bt_{m_0})\arrow[dashed]{u}\arrow{ur}{\Gm^{-1}_{m_0}}&
\end{tikzcd}
\]
We can further assume that all homomorphisms are contractive by enlarging $n_1$ if necessary.\\
Continuing this way, we get the following commutative diagram:
\[
\begin{tikzcd}
Ell(\af_{n_0})\arrow{r} \arrow[dashed]{d}& Ell(\af_{n_1}) \arrow{r}\arrow[dashed]{d}\cdots\arrow{r}& Ell(\af)\arrow{d}\\
Ell(\bt_{m_0})\arrow{r} \arrow[dashed]{ur}& Ell(\af_{m_1}) \arrow{r}\arrow[dashed]{ur}\cdots\arrow{r}& Ell(\bt)\arrow{u}\\
\end{tikzcd}
\]
By Proposition \ref{homfd} we can lift the commutative diagram to the diagram
\[
\begin{tikzcd}
(G, A_{n_0} , \af_{n_0})\arrow{r} \arrow{d}& (G, A_{n_1} , \af_{n_1}) \arrow{r}\arrow{d}\cdots\arrow{r}& (G, A, \af)\arrow[dashed]{d}\\
(G, B_{m_0} , \bt_{m_0})\arrow{r} \arrow{ur}&(G, B_{m_1} , \af_{m_1}) \arrow{r}\arrow{ur}\cdots\arrow{r}&(G, B, \bt)\arrow[dashed]{u}\\
\end{tikzcd}
\]
By Proposition \ref{homfd} again, the above diagram commutes up to conjugation by unitaries. Hence we can make it truly commutative by conjugating suitable unitaries. Let $T \colon (G, A, \af) \rightarrow (G, B, \bt)$ be the induced homomorphism. Then it's easy to see that $T$ is an equivariant isomorphism such that $T^\ast = \Gm$.
\end{proof}

\section{Simplification of the invariant}
Let $\af\colon G\cra A$ be an action, where $G$ is compact. When we define the invariant $Ell(\af)$, we made use of all possible 2-cocycles $\ld$. There is some redundancy that we can get rid of. First, we note the following:
\begin{prp}
Let $\ld$ and $\ld_1$ be two cohomologous 2-cocycle. Then there is an isomorphism between $K^{\ld}(\af)$ and $K^{\ld_1}(\af)$.
\end{prp}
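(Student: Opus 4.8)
The plan is to mimic the proof of the earlier Proposition that $R^{\ld_1}(G) \cong R^{\ld_2}(G)$ for cohomologous cocycles, but now in the presence of the coefficient algebra $(G, A, \af)$. Write $\ld_1 = d\mu\,\ld$ for some Borel function $\mu\colon G \to \T$. Given a $\ld$-representation $(G, H, \rho)$, form the $\ld_1$-representation $L(\rho)$ with $L(\rho)(g) = \mu(g)\rho(g)$, exactly as before. The key observation is that the induced action on $L(H)$ is \emph{unchanged} by this twist: for $t \in L(H)$ one has $L(\rho)(g)\,t\,L(\rho)(g)^{-1} = \mu(g)\rho(g)\,t\,\overline{\mu(g)}\rho(g)^{-1} = \rho(g)\,t\,\rho(g)^{-1}$, since $\mu(g) \in \T$ is a scalar. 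Hence the diagonal action on $L(H) \otimes A$ is literally the same whether we use $\rho$ or $L(\rho)$, so an element $p \in L(H)\otimes A$ is invariant for one precisely when it is invariant for the other.

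With that in hand, I would define the map on the semigroup level by $(p, \rho) \mapsto (p, L(\rho))$ as a map $P^{\ld}(G, A, \af) \to P^{\ld_1}(G, A, \af)$. First I would check it is well-defined on Murray--von Neumann equivalence classes: if $u \in L(H_1, H_2)\otimes A$ is a $G$-invariant partial isometry intertwining $(p_1,\rho_1)$ and $(p_2,\rho_2)$ for the $\rho_i$-actions, then because the action of $G$ on $L(H_1,H_2)\otimes A$ is also unaffected by the scalar twist (same computation: $\mu(g)\rho_2(g)\,t\,\overline{\mu(g)}\rho_1(g)^{-1} = \rho_2(g)\,t\,\rho_1(g)^{-1}$), the same $u$ is $G$-invariant for the $L(\rho_i)$-actions and still satisfies $u^*u = p_1$, $uu^* = p_2$. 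Next, additivity is immediate since $L(\rho_1 \oplus \rho_2) = L(\rho_1)\oplus L(\rho_2)$ and the block-diagonal projection is preserved. So we get a semigroup homomorphism $V^{\ld}(G,A,\af) \to V^{\ld_1}(G,A,\af)$, and the analogous construction with $\mu^{-1}$ (using $\ld = d(\mu^{-1})\,\ld_1$) gives a two-sided inverse. This passes to the Grothendieck completions, and it visibly sends $V^{\ld}(\af)_+$ onto $V^{\ld_1}(\af)_+$, so it is an order isomorphism $K_0^{\ld}(\af) \cong K_0^{\ld_1}(\af)$; in the non-unital case one checks it commutes with the canonical map $p^*$ from the unitalization and hence restricts to the kernels.

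The main (and really the only) obstacle is the bookkeeping around the non-uniqueness of $\mu$: the isomorphism depends on the choice of $\mu$ with $\ld_1 = d\mu\,\ld$, just as in the cocycle-representation-group case, so this proposition should be read as an existence statement, not a canonical identification. This is worth a sentence of remark but causes no real difficulty here. If one wanted to avoid constructing the map by hand, an alternative is to invoke Julg's theorem together with the known fact that twisted crossed products by cohomologous cocycles are isomorphic (see \cite{Packer1989}), giving $K_0^{\ld}(\af) \cong K_0(A\rtimes_{\af,\bar\ld}G) \cong K_0(A\rtimes_{\af,\overline{\ld_1}}G) \cong K_0^{\ld_1}(\af)$; but the direct argument above is more elementary and keeps track of the order structure transparently, so I would present that.
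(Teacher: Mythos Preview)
Your proposal is correct and follows essentially the same approach as the paper: choose $\mu$ with $\ld_1=(d\mu)\ld$, send $[(p,\pi)]$ to $[(p,\mu\pi)]$, observe that the scalar twist leaves the induced action on $L(H)\otimes A$ unchanged so invariance of $p$ is preserved, and invert with $\mu^{-1}$. Your write-up is in fact more thorough than the paper's (you explicitly verify compatibility with Murray--von Neumann equivalence and mention the non-unital case), and your remark about the non-canonicity of the isomorphism is apt and echoed later in the paper when the map $f_\mu$ is used in Proposition~\ref{isoK}.
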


\begin{proof}
Let $\mu\colon G\rightarrow \T$ be a Borel map such that $\ld_1=(d\,\mu)\ld$. Let $[(p, \pi)]$ be an element in $V^{\ld}(\af)$. Let $\pi_1=\mu\pi$. It's easy to check that $\pi_1$ defines a $\ld_1$-representation. Since $\pi_1$ and $\pi$ induce the same action, we see that $p$ is invariant under $\af\otimes \Ad \pi_1$. Hence $[(p, \pi_1)]$ defines an element in $V^{\ld_1}(\af)$. Let us denote this map $[(p, \pi)]\rightarrow [(p, \pi_1)]$ by $f_\mu$. One can check from the definition that $f_\mu$ is a semigroup homomorphism, with the obvious inverse map $f_{\mu^{-1}}$. Hence $f_{\mu}$ defines an semigroup isomorphism, which induces an isomorphism between $K^{\ld}(\af)$ and $K^{\ld_1}(\af)$.
\end{proof}

Now we can show that, in order to have a homomorphism for the invariants, it's enough to use a complete set of representatives of 2-cocycles containing the trivial 2-cocycle.
\begin{prp}\label{isoK}
Let $(G, A, \af)$ and $(G, B, \bt)$ be two $G$-C*-algebras, where $G$ is compact. Let $\{\ld_i \,\vert\, i\in I\}$ be a complete set of representatives of $H_m^2(G, \T)$. Suppose we have a collection of homomorphisms
\beq
\Gm^{\ld_i} \colon (K_0^{\ld_i} (\af), K_0^{\ld_i}(\af)_+) \rightarrow (K_0^{\ld_i}(\bt), K_0^{\ld_i}(\bt)_+),
\eeq
which are compatible with the partial actions of all cocycle representation groups of the form $R^{\ld_i^{-1}\ld_j}(G)$:
\[
\begin{tikzcd}
K_0^{\ld_i} (\af)\arrow{r}{\Gm^{\ld_i}}\arrow{d}[left]{R^{\ld_i^{-1}\ld_j}(G)} & K_0^{\ld_i} (\bt)\arrow{d}[left]{R^{\ld_i^{-1}\ld_j}(G)}\\
K_0^{\ld_j} (\af)\arrow{r}{\Gm^{\ld_j}} & K_0^{\ld_j} (\bt)
\end{tikzcd}
\] 
Then there is a homomorphism $T\colon Ell(\af)\rightarrow Ell(\bt)$ such that $T^{\ld_i}=\Gm^{\ld_i}$, for each $i\in I$.
\end{prp}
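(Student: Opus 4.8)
The plan is to transport the given family $\{\Gm^{\ld_i}\}_{i\in I}$ to an arbitrary $2$-cocycle by means of the isomorphisms $f_\mu$ (from the proposition immediately preceding this one) and $L_\mu$ (from the Section~3 proposition comparing $R^{\ld_1}(G)$ and $R^{\ld_2}(G)$), and then to check that the transported family respects every partial action.

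First I would fix the bookkeeping. For each $2$-cocycle $\ld$ let $i(\ld)\in I$ be the unique index with $\ld$ cohomologous to $\ld_{i(\ld)}$, and choose a Borel function $\mu_\ld\colon G\to\T$ with $\ld=(d\mu_\ld)\ld_{i(\ld)}$, normalized so that $\mu_{\ld_i}\equiv 1$ for every $i\in I$. For a $2$-cocycle $\sigma$ and a Borel function $\mu$, write $f^A_\mu\colon K_0^{\sigma}(\af)\to K_0^{(d\mu)\sigma}(\af)$ and $f^B_\mu\colon K_0^{\sigma}(\bt)\to K_0^{(d\mu)\sigma}(\bt)$ for the $K$-theory isomorphisms, $L_\mu\colon R^{\sigma}(G)\to R^{(d\mu)\sigma}(G)$, $L(\rho)(g)=\mu(g)\rho(g)$, for the representation-group isomorphism, and $\mathrm{pa}^A_{\sigma,\tau}\colon R^{\sigma}(G)\times K_0^{\tau}(\af)\to K_0^{\sigma\tau}(\af)$ (resp.\ $\mathrm{pa}^B_{\sigma,\tau}$) for the partial action of $R^\sigma(G)$ on $K_0^\tau(\af)$ (resp.\ on $K_0^\tau(\bt)$). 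Then I would define
\[
T^{\ld}\;:=\;f^B_{\mu_\ld}\circ\Gm^{\ld_{i(\ld)}}\circ\bigl(f^A_{\mu_\ld}\bigr)^{-1}\colon\;K_0^{\ld}(\af)\longrightarrow K_0^{\ld}(\bt).
\]
Because $(f^A_{\mu_\ld})^{-1}=f^A_{\mu_\ld^{-1}}$ and all the $f_\mu$ are ordered-group isomorphisms, each $T^\ld$ is an ordered-group homomorphism; and $T^{\ld_i}=\Gm^{\ld_i}$ since $\mu_{\ld_i}\equiv 1$.

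The engine of the argument is a naturality lemma: for Borel functions $\mu_1,\mu_2\colon G\to\T$ and $2$-cocycles $\sigma,\tau$ one has
\[
\mathrm{pa}^A_{(d\mu_1)\sigma,(d\mu_2)\tau}\circ\bigl(L_{\mu_1}\times f^A_{\mu_2}\bigr)\;=\;f^A_{\mu_1\mu_2}\circ\mathrm{pa}^A_{\sigma,\tau},
\]
and likewise for $\bt$. This is a one-line verification: $\mathrm{pa}$ sends $([\pi],[(p,\pi_2)])$ to $[(p\otimes 1,\pi_2\otimes\pi)]$, and replacing $\pi$ by $\mu_1\pi$ and $\pi_2$ by $\mu_2\pi_2$ multiplies $\pi_2\otimes\pi$ by the scalar function $\mu_1\mu_2$ while leaving $p\otimes 1$ untouched, since scalars are central so the $G$-action on $L(H_2\otimes H_1)$ does not see the twist.

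Finally I would verify that $\{T^\ld\}$ is compatible with $\mathrm{pa}_{\sigma,\tau}$ for arbitrary $2$-cocycles $\sigma,\tau$. Putting $i=i(\tau)$ and $k=i(\sigma\tau)$, one has $[\sigma]=[\ld_i^{-1}\ld_k]$ in $H_m^2(G,\T)$, so $\sigma=(d\mu)\,\ld_i^{-1}\ld_k$ with $\mu=\mu_{\sigma\tau}\mu_\tau^{-1}$, and $\ld_i^{-1}\ld_k$ is precisely a cocycle of the form occurring in the hypothesis (take $j=k$, noting $(\ld_i^{-1}\ld_k)\ld_i=\ld_k$). Given $[\pi]\in R^\sigma(G)$ and $x\in K_0^\tau(\af)$, I would write $[\pi]=L_\mu([\rho])$ and $x=f^A_{\mu_\tau}(y)$ with $[\rho]\in R^{\ld_i^{-1}\ld_k}(G)$ and $y\in K_0^{\ld_i}(\af)$; use the naturality lemma together with $\mu\mu_\tau=\mu_{\sigma\tau}$ to rewrite $\mathrm{pa}^A_{\sigma,\tau}([\pi],x)=f^A_{\mu_{\sigma\tau}}\bigl(\mathrm{pa}^A_{\ld_i^{-1}\ld_k,\ld_i}([\rho],y)\bigr)$; apply $T^{\sigma\tau}=f^B_{\mu_{\sigma\tau}}\circ\Gm^{\ld_k}\circ(f^A_{\mu_{\sigma\tau}})^{-1}$ and then the hypothesis on $\Gm$ to replace $\Gm^{\ld_k}\circ\mathrm{pa}^A_{\ld_i^{-1}\ld_k,\ld_i}([\rho],y)$ by $\mathrm{pa}^B_{\ld_i^{-1}\ld_k,\ld_i}([\rho],\Gm^{\ld_i}(y))$; and run the naturality lemma for $\bt$ in reverse, whereupon the $\mu$'s recombine to give exactly $\mathrm{pa}^B_{\sigma,\tau}([\pi],T^\tau(x))$. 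The only genuine obstacle is this cohomological bookkeeping: selecting the coboundaries that link each relevant pair of cocycles and checking that the products $\mu$, $\mu_\tau$, $\mu_{\sigma\tau}$ recombine as claimed. Each $\mu_\ld$ carries a character-valued ambiguity, but since one choice is fixed once and for all and independence of the choice is never invoked, this causes no difficulty.
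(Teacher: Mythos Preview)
Your proof is correct and follows essentially the same approach as the paper's. You define $T^\ld$ by conjugating $\Gm^{\ld_{i(\ld)}}$ with the isomorphisms $f_{\mu_\ld}$, exactly as the paper does, and your ``naturality lemma'' is precisely the content of the paper's verification that the left and right faces of its cube diagram commute; the paper carries this out by an explicit element-level computation inside the cube, while you package it as an abstract identity and then chase the $\mu$'s, but the underlying calculation is identical.
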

\begin{proof}
Any two 2-cocycle $\ld$ is cohomologous to some $\ld_i$. Pick any Borel function $\mu$ such that $\ld=(d\,\mu)\ld_i$. Let the map $f_\mu$ be defined as in the proof of \ref{isoK}. We define a map:
\beq
T^{\ld}\colon (K_0^{\ld} (\af), K_0^{\ld}(\af)_+) \rightarrow (K_0^{\ld}(\bt), K_0^{\ld}(\bt)_+)
\eeq
by $T^{\ld}=f_\mu\circ \Gm^{\ld_i}\circ f_{\mu^{-1}}$. We want to show that these homomorphisms are compatible with the partial actions. So let $\om_i$, $\om_j$ be two 2-cocycles such that $\om_i=(d\,\mu_i) \ld_i$ and $\om_j=(d\,\mu_j) \ld_j$. Let $T^{\om_i}, T^{\om_j}$ be defined as above. Let $\pi$ be a $\om_i^{-1}\om_j$-representation. Let $\tilde{\pi}=\mu_i\mu_j^{-1}\pi$, which is a $\ld_i^{-1}\ld_j$-representation. We then have the following diagram:

\[
\begin{tikzcd}
& K_0^{\ld_i}(\af)\arrow{rr}{\Gm^{\ld_i}\quad\quad\quad} \arrow{dd}[near start]{\lbrack\tilde{\pi}\rbrack} & & K_0^{\ld_i}(\bt)\arrow{dd}[near start]{\lbrack\tilde{\pi}\rbrack} \arrow{dl}{f_{\mu_i}}\\
K_0^{\om_i}(\af)\arrow{ur}{f_{\mu_i^{-1}}} \arrow{dd}[near start]{\lbrack\pi\rbrack} \arrow[dashed, crossing over]{rr}{\quad\quad T^{\om_i}}& &K_0^{\om_i}(\bt)\arrow[crossing over]{dd}[near start]{\lbrack\pi\rbrack}\\
& K_0^{\ld_j}(\af)\arrow{rr}{\Gm^{\ld_j}\quad\quad\quad} & & K_0^{\ld_j}(\bt)\arrow{dl}{f_{\mu_j}}\\
K_0^{\om_j}(\af)\arrow{ur}{f_{\mu_j^{-1}}}\arrow[dashed]{rr}{\quad\quad T^{\om_i}}&& K_0^{\om_j}(\bt)\\
\end{tikzcd}
\]

The top and bottom squares are commutative by the definition of $T^{\ld}$, the back square is commutative by assumption. We now check that the right square is commutative. Let $[p, \pi_0]\in K_0^{\ld_i}(\bt)$. Then we have:
\beq
[\pi]f_{\mu_i}([(p, \pi_0)])=[\pi][(p, \mu_i\pi_0)]=[p\otimes 1, (\mu_i\pi_0)\otimes \pi].
\eeq
Therefore
\begin{align*}
&f_{\mu_j}([\tilde{\pi}][(p, \pi_0)])=f_{\mu_j}([(p\otimes 1, \pi_0\otimes \tilde{\pi})])\\
&=[(p\otimes 1, \mu_j(\pi_0\otimes \mu_i\mu_j^{-1}\pi))]\\
&=[(p\otimes 1, (\mu_i\pi_0)\otimes \pi)]=[\pi]f_{\mu_i}([(p, \pi_0)]).\\
\end{align*}
Hence the right square is commutative. Similar computation shows that the left square is commutative. Hence the front square is also commutative, which finishes the the proof.
\end{proof}


\bibliographystyle{amsplain}

Qingyun Wang, \textsc{Department of Mathematics, University of Toronto}\par\nopagebreak
  \textit{E-mail address}: \texttt{wangqy@math.toronto.edu}

\end{document}